\newtheorem{thm}{Theorem}[section]
\newtheorem{lemma}[thm]{Lemma}
\newtheorem{proposition}[thm]{Proposition}
\newtheorem{corol}[thm]{Corollary}
\theoremstyle{definition}
\newtheorem{example}[thm]{Example}
\newtheorem{remark}[thm]{Remark}
\newcommand{\C}{{\mathbb C}}
\newcommand{\N}{{\mathbb N}}
\newcommand{\R}{{\mathbb R}}
\newcommand{\KK}{\mathchoice{\text{\large$\mathfrak{K}$}}
                            {\text{\large$\mathfrak{K}$}}
                            {\text{\normalsize$\mathfrak{K}$}}
                            {\text{$\mathfrak{K}$}}}
\newcommand{\eps}{\varepsilon}
\newcommand{\rrarrow}{\rightrightarrows}
\newcommand{\Ecal}{\mathscr{E}}
\newcommand{\Rcal}{\mathscr{R}}
\newcommand{\ut}{\tilde{u}}
\newcommand{\zt}{\tilde{z}}
\newcommand{\At}{\tilde{A}}
\newcommand{\sbar}{\bar{s}}
\newcommand{\ub}{\bar{u}}
\newcommand{\vb}{\bar{v}}
\newcommand{\xb}{\bar{x}}
\newcommand{\yb}{\bar{y}}
\newcommand{\zb}{\bar{z}}
\newcommand{\bi}{\begin{itemize}}
\newcommand{\ei}{\end{itemize}}
\newcommand{\bpm}{\begin{pmatrix}}
\newcommand{\epm}{\end{pmatrix}}
\newcommand{\norm}[1]{|#1|}
\newcommand{\ssoucin}[2]{\langle #1,#2\rangle}
\newcommand{\epi}{\operatorname{epi}}
\newcommand{\Nc}{\operatorname{N}}
\newcommand{\Tc}{\operatorname{T}}
\newcommand{\gph}{\operatorname{gph}}
\newcommand{\cl}{\operatorname{cl}}
\newcommand{\Limsup}{\operatorname*{Limsup}}
\newcommand\DT[1]{\mathchoice
                 {{\buildrel{\hspace*{.1em}\text{\LARGE.}}\over{#1}}}
                 {{\buildrel{\hspace*{.1em}\text{\Large.}}\over{#1}}}
                 {{\buildrel{\hspace*{.1em}\text{\large.}}\over{#1}}}
                 {{\buildrel{\hspace*{.1em}\text{\large.}}\over{#1}}}}
\renewcommand\P{\pi}
\newcommand\Pb{\bar{\P}}
\newcommand\GD{\Gamma_{\scriptscriptstyle\mathrm{D}}}
\newcommand\GC{\Gamma_{\scriptscriptstyle\mathrm{C}}}
\newcommand{\WT}[1]{\mathchoice
     {\text{\small$\widetilde{\text{\normalsize$#1$}}\hspace*{.03em}$}}
                    {\text{\small$\widetilde{\text{\normalsize$#1$}}$}}
                    {\widetilde{#1\hspace*{-.02em}}\hspace*{.03em}}
                    {\tilde{#1}}}
\newcommand\bigtimes{\text{\Large$\times$}}
\newcommand{\nablaS}{\nabla_{\scriptscriptstyle\textrm{\hspace*{-.2em}S}}^{}}
\newcommand{\divS}{\mathrm{div}_{\scriptscriptstyle\textrm{\hspace*{-.1em}S}}^{}}
\newcommand{\uC}{u_{\scriptscriptstyle\textrm{C}}}
\newcommand{\uD}{u_{\scriptscriptstyle\textrm{D}}}
\newcommand{\uF}{u_{\scriptscriptstyle\textrm{F}}}
\newcommand{\uT}{u_{\scriptscriptstyle\textrm{T}}}
\newcommand{\uN}{u_{\scriptscriptstyle\textrm{N}}}
\newcommand{\wD}{w_{\scriptscriptstyle\textrm{D}}}
\newcommand{\CC}{_{\scriptscriptstyle\textrm{CC}}}
\newcommand{\CD}{_{\scriptscriptstyle\textrm{CD}}}
\newcommand{\FD}{_{\scriptscriptstyle\textrm{FD}}}
\newcommand{\CF}{_{\scriptscriptstyle\textrm{CF}}}
\newcommand{\FC}{_{\scriptscriptstyle\textrm{FC}}}
\newcommand{\FF}{_{\scriptscriptstyle\textrm{FF}}}
\newcommand{\DC}{_{\scriptscriptstyle\textrm{DC}}}
\newcommand{\DD}{_{\scriptscriptstyle\textrm{DD}}}
\newcommand{\DF}{_{\scriptscriptstyle\textrm{DF}}}
\newcommand{\kappaN}{\kappa_{\scriptscriptstyle\textrm{N}}^{}}
\newcommand{\kappaT}{\kappa_{\scriptscriptstyle\textrm{T}}^{}}
\newcommand{\alphaF}{\alpha_{\scriptscriptstyle\textrm{F}}^{}}
\newcommand{\dx}{\,\mathrm{d}x}
\newcommand{\dt}{\,\mathrm{d}t}
\newcommand{\dS}{\,\mathrm{d}S}
\newcommand{\coder}{\operatorname{D}^*}
\newcommand{\partition}{\tilde{Q}}
\newcommand{\partitionbig}{Q}
\begin{document}

\vspace*{1em}

\begin{center}
\Large\bfseries
Identification of some nonsmooth evolution systems with illustration on adhesive contacts at small strains.
\end{center}

\bigskip

\begin{center}
\sc
Luk\'a\v s Adam$^{1,2}$, Ji\v r\'\i\ V. Outrata$^1$, Tom\'{a}\v{s} Roub\'\i\v{c}ek$^{2,3,1}$.
\end{center}

{\footnotesize
\noindent
$^1$ 
Institute of Information Theory and Automation, Academy of Sciences,
Pod vod\'{a}renskou v\v{e}\v{z}\'{\i}~4,
\\\hspace*{1em}CZ--18208~Praha~8, Czech Republic.
\\
$^2$ Faculty of Mathematics and Physics, Charles University, Sokolovsk\'a 83,
CZ--18675~Praha~8, Czech Republic
\\
$^3$ Institute of Thermomechanics of the ASCR, 
Dolej\v skova 5, CZ--18200~Praha~8, Czech Republic.
}

\bigskip

\begin{center}
\begin{minipage}{15.5cm}
{\small
\noindent{\textbf{Abstract}}: A class of evolution quasistatic systems which leads, after a suitable time discretization, to recursive nonlinear programs, is considered and optimal control or identification problems governed by such systems are investigated. The resulting problem is an evolutionary Mathematical Programs with Equilibrium Constraints (MPEC). A subgradient information of the (in general nonsmooth) composite objective function is evaluated and the problem is solved by the Implicit programming approach. The abstract theory is illustrated on an identification problem governed by delamination of a unilateral adhesive contact of elastic bodies discretized by finite-element method in space and a semi-implicit formula in time.
Being motivated by practical tasks, an identification problem of the fracture toughness and of the elasticity moduli of the adhesive is computationally implemented and tested numerically on a two-dimensional example. Other applications including frictional contacts or bulk damage, plasticity, or phase transformations are outlined.

\medskip

\noindent{\textbf{Keywords}}: rate-independent systems, 
optimal control, identification, fractional-step time discretization, 
quadratic programming, gradient evaluation, variational analysis, 
implicit programming approach, limiting subdifferential, coderivative, 
nonsmooth contact mechanics, delamination.

\medskip

\noindent{\textbf{AMS Classification}}: 
35Q90, 
49N10, 
65K15, 
65N38, 
65M32, 
74M15,  
74P10, 
74R20, 
90C20. 
}
\end{minipage}
\end{center}

\section{Introduction}

Many evolution systems have the structure of the {\it generalized gradient 
flow} $$\DT q\in\partial_\xi\Rcal^*(q;-\partial_q\Ecal(t,q))$$ with 
functionals $\Ecal(t,q)$ and $\Rcal^*(q;\xi)$. 
Here $q$ is an abstract state of the system and $\DT q$ denotes its time 
derivative. Quite typically,  $\Rcal^*(q;\cdot)$ is convex and, making 
the conjugate of $\Rcal^*(q;\xi)$ with respect to the ``driving force'' 
variable $\xi$, i.e.\ 
$\Rcal(q;v)=\sup_{\xi}[\langle v,\xi\rangle-\Rcal^*(q;\xi)]$,
the generalized gradient flow can equivalently be written in the 
Biot-equation form 
\begin{align}\label{Biot}
\partial_{\DT q}\Rcal(q;\DT q)+\partial_q\Ecal(t,q)\ni0.
\end{align}
In many cases, the problem is nonsmooth due to 
a nonsmoothness of $\Rcal(q;\cdot)$ or $\Ecal(t,q)$, which is why we wrote
inclusion in \eqref{Biot} rather than equality. Ansatz \eqref{Biot} is
very general and covers great variety of problems in particular in nonsmooth 
continuum mechanics.  The state variable $q$ may involve displacements and various 
internal parameters (but also various concentrations of some constituents 
subjected to diffusive processes).  In this paper, we focus on a 
subclass of such problems where 
the state has the structure 
\begin{align}
q=(u,z)
\end{align} 
for each time instance $t$ in a Banach space $U\times Z$. In this way, a 
quasistatic {\it plasticity}, or {\it damage}, or various 
{\it phase transformations} at small 
strains can be modelled, and also various problems in {\it contact mechanics} 
like friction or adhesion, together with various combinations of these 
phenomena. 

After a suitable time discretization, \eqref{Biot} gives rise to recursive
optimization problems. Often (or, in applications in continuum mechanics
we have in mind, rather typically) $q$ ranges over an infinite-dimensional 
Banach space and, after a possible ``spatial'' discretization,
these minimization problems have a structure of strictly convex 
{\it Quadratic Programming} problems. It is then relatively easy to use 
such a discretized evolution problem as a governing system for some 
optimization problem, e.g., optimal control or identification of parameters. This leads to {\it Mathematical Programs with Evolution Equilibrium Constraints} 
(MPEEC) which have been studied e.g. in \cite{kocvara.kruzik.outrata.2006,kocvara.outrata.2004,outrata.2000}.

The functionals in \eqref{Biot} depend also on an abstract 
parameter $\P$ and have a special form
$$
\aligned
\Rcal(\P,q;\DT q)&=\Rcal_1(\P,u,z;\DT u) +\Rcal_2(\P,u,z;\DT z),\\
\Ecal(t,\pi,q)&=\Ecal(t,\P,u,z).
\endaligned
$$
We then consider an optimal-control 
or an identification problem on a fixed time interval $[0,T]$:
\begin{align}\label{identification}
\left.\begin{array}{ll}
\text{Minimize}&\displaystyle{\int_0^T\!\!
j(u,z)
\dt+
H(\P)}
\\[.2em]
\text{subject to}&\partial_{\DT u}\Rcal_1(\P,u,z;\DT u)+
\partial_u\Ecal(t,\P,u,z)\ni0
\ \ \text{ for a.a.\ }t\!\in\![0,T],
\ \ u(0)=u_0,\ 
\\[.5em]&\partial_{\DT z}\Rcal_2(\P,u,z;\DT z)+\partial_z\Ecal(t,\P,u,z)\ni0
\ \ \text{ for a.a.\ }t\!\in\![0,T],\ \ z(0)=z_0,
\\[.5em]
&u\in L^\infty(0,T;U),\ \ z\in L^\infty(0,T;Z),\ \ \P\in\Pi
\end{array}\right\}
\end{align}
with some $j:U\times Z\to\R$ and $H:\Pi\to\R$ specified later; here
$\Pi$ is a closed convex set of a Banach space where $\P$ lives. 
In some models, 
the flow rule for $u$ in \eqref{identification} is purely static,
i.e.\ $\Rcal_1=0$. In this case, 
if there is no dissipation in this part, then only $z_0$ but not $u_0$ is decisive when considering
$\partial_u\Ecal(t,\P,u_0,z_0)\ni0$. We use the standard notation for Bochner space $L^\infty(0,T;\cdot)$
of Banach-space-valued Bochner measurable functions on $[0,T]$.

The main aim of the paper is a deep analysis of a discretized version of MPEEC \eqref{identification} leading both to sharp necessary optimality conditions as well as to an efficient numerical procedure based on the so-called {\it Implicit Programming} approach (ImP), cf. \cite{luo.pang.ralph.1996,kocvara.outrata.zowe.1998}. In particular, on  the basis of the subdifferential calculus of B. Mordukhovich \cite{mordukhovich.2006,rockafellar.wets.1998} we will show that the \textit{solution map} $ S: \P \mapsto (u,z) $ defined via the discretized equilibrium relations in \eqref{identification}, is single-valued and locally Lipschitz  and satisfies henceforth the basic ImP hypothesis. In the respective proof one 
has to deal with a difficult multifunction arising in connection with our \textit{evolving} constraint sets. The application of standard tools of generalized differential calculus provides us in this case only with an upper estimate of the coderivative of the normal cone mapping to the \textit{overall} constraint set, which could be a substantial drawback both in the optimality conditions as well as in the used numerical approach. To overcome this hurdle, we have employed a new result from \cite{adam.cervinka.pistek.2014} which enables us to compute the limiting coderivative of the mentioned normal cone mapping \textit{exactly}.
 
The plan of the paper is the following. In Section~\ref{sect-disc},
we briefly introduce a suitable discretization of the identification 
problem \eqref{identification} that yields a unique response $(u,z)$
of the constraint system of \eqref{identification} for a given $\P$ and allows 
for efficient optimization technique. After introducing some notation and 
notions from variational analysis in Section~\ref{sect-aux}, we formulate 
in Section~\ref{sect-opt-cond} first-order necessary optimality 
condition for the discrete version of MPEEC \eqref{identification} and derive 
a subgradient formula for the composite objective function of the discretized 
problem. In Section~\ref{sect-formulation}, we formulate a specific 
application-motivated identification problem from contact continuum mechanics 
that fits (and illustrates) the system (\ref{identification}). Eventually, 
in Section~\ref{sect-numerics}, we illustrate the usage of the subgradient 
evaluation procedure via an adhesive contact problem in a nontrivial 
two-dimensional example involving a spatial discretization by 
{\it Finite-Element-Method} (FEM).

\section{Discretization of the identification problem \eqref{identification}}
\label{sect-disc}

The natural procedure is to discretize the problem \eqref{identification} in time. This might be a rather delicate problem
especially when the controlled system (\ref{identification}b,c) 
exhibits responses of different time scales, and in particular with
tendencies for jumping, which quite typically happens in rate-independent 
systems
governed by nonconvex potentials $\Ecal(t,\P, \cdot,\cdot)$. 

We consider (for simplicity) an equidistant partition of the time
interval $[0,T]$ with a time step $\tau>0$ such that $T/\tau=:K\in\N$
and then a fractional-step-type semi-implicit time 
discretization of (\ref{identification}b,c).
Moreover, if $U$, $Z$ or $\Pi$ is infinite-dimensional, the time-discrete 
problem still remains infinite-dimensional, and to implement it on computers, we 
need to apply also an  abstract space discretization controlled by a 
parameter, let us denote it by $h>0$. Such an approximation can be considered by replacing $U$, $Z$, and $\Pi$ in \eqref{identification-disc} by their finite-dimensional subsets 
 $U_h$, $Z_h$, and $\Pi_h$. Counting also a possible
numerical approximation of $\Ecal$, denoted by $\Ecal_h$, altogether, \eqref{identification} turns into the problem:
\begin{align}\label{identification-disc}
\left.\begin{array}{ll}
\text{Minimize}\!\!&\tau\,\mbox{\large$\sum$}_{k=1}^{K}
\,j(u_{\tau h}^k,z_{\tau h}^k)+H(\P)
\\[.2em]
\text{subject to}\!\!\!&\displaystyle{
\partial_{\DT u}\Rcal_1\Big(\pi,u_{\tau h}^{k-1},z_{\tau h}^{k-1};\frac{u_{\tau h}^k{-}u_{\tau h}^{k-1}}{\tau}\Big)+
\partial_u\Ecal_h(k\tau,\pi,u_{\tau h}^k,z_{\tau h}^{k-1})\ni0,\ \ u_{\tau h}^0=u_{0h},\!\!}
\\[.5em]&\displaystyle{
\partial_{\DT z}\Rcal_2\Big(\pi,u_{\tau h}^{k-1},z_{\tau h}^{k-1};
\frac{z_{\tau h}^k{-}z_{\tau h}^{k-1}}{\tau}\Big)
+\partial_z\Ecal_h(k\tau,\pi,u_{\tau h}^k,z_{\tau h}^k)\ni0,\ \ \ \ \,z_{\tau h}^0=z_{0h},
\!\!}
\\[.5em]&
u_{\tau h}^k\in U_h\ \text{ and }\ z_{\tau h}^k\in Z_h
\ \text{ for }k=1,...,K,\ \ \P\in\Pi_h,
\end{array}\right\}\!\end{align}
where 
$(u_{0h},z_{0h})\in U_h{\times}Z_h$ is an approximation of the
initial condition $(u_0,z_0)$. Let us note that the controlled system 
(\ref{identification-disc}b,c)
decouples so that, for a given $\P$, one is to solve alternating optimization 
problems
\begin{subequations}\label{2-minimizations}\begin{align}
&\text{Minimize}\!\!&&
\tau\Rcal_1\Big(\pi,u_{\tau h}^{k-1},z_{\tau h}^{k-1};\frac{u{-}u_{\tau h}^{k-1}}{\tau}\Big)
+\Ecal_h(k\tau,\pi,u,z_{\tau h}^{k-1})&&\!\!\text{subject to }u\!\in\!U_h
\intertext{and, taking (one of) its solution for $u_{\tau h}^k$, further}
 &\text{Minimize}\!\!&&
\tau\Rcal_2\Big(\pi,u_{\tau h}^{k-1},z_{\tau h}^{k-1};\frac{z{-}z_{\tau h}^{k-1}}{\tau}\Big)
+\Ecal_h(k\tau,\pi,u_{\tau h}^k,z)&&\!\!\text{subject to }z\!\in\!Z_h
\end{align}\end{subequations}
which yields $z_{\tau h}^k$ as (one of) its solution. Assuming 
$\Ecal(t,\P,\cdot,\cdot)$ as well as its 
approximation $\Ecal_h(t,\P,\cdot,\cdot)$
separately strictly convex (and, of course, coercive 
with compact level sets) and $\Rcal_i(\P,u,z;\cdot)$ convex, $i=1,2$,
both problems in \eqref{2-minimizations} have unique solutions
 $u_{\tau h}^k$ and $z_{\tau h}^k$, respectively,
and thus the whole recursive problem in the constraint system (\ref{identification-disc}) has 
a unique response for a given $\P$ as well. This allows us to reformulate 
\eqref{identification-disc} as a minimization problem for a functional 
depending on $\P$ only, cf.\ \eqref{Label optimal control discretized general} 
below. This will be exactly the situation we will consider in the rest of this 
article. The fully discretized system \eqref{identification-disc} can thus be 
understood as an MPEC for which a developed theory exists.

In what follows, we will confine ourselves to problems with a bit more detailed
(but nevertheless still fairly general) structure, namely
\begin{subequations}\label{structure}\begin{align}
&\Ecal(t,\pi,u,z)=
\begin{cases}\mathcal{E}(t,\pi,u,z)&\text{if }u\in\Lambda_0^t,\
z\in K_0^t,\\
\infty&\text{otherwise},\end{cases}
\\
&\Rcal_1(\pi,u,z,\DT u)=\mathcal{R}_1(\pi,u,z,\DT u),
\\
&\Rcal_2(\pi,u,z,\DT z)=\begin{cases}\mathcal{R}_2(\pi,u,z,\DT z)
&\text{if }\DT z\in K_1,\\
\infty&\text{otherwise},\end{cases}
\end{align}\end{subequations}
where $\mathcal{E}$, $\mathcal{R}_1$, and $\mathcal{R}_2$ are finite
and smooth, $\Lambda_0^t$, $K_0^t$, and $K_1$ are convex closed set, the last 
one being a cone. We will use $\mathcal{E}_h$ as a possible approximation 
of $\mathcal{E}$.

Although, in Section~\ref{sect-numerics}, we will illustrate usage of this 
model on a rather special inverse adhesive-contact problem, most of the 
considerations can expectedly be applied (after possible modification) to 
many other problems from continuum mechanics and physics, as (various 
combination of) damage, phase-transformations, plasticity, etc.

\begin{remark}[Stability and convergence for $\tau\to0$ and $h\to0$]
\label{rem-convergence}
The focus of this article is on the identification of the discrete 
finite-dimensional problem. Nevertheless, the convergence towards the 
original continuous problem when $\tau\to0$ and $h\to0$ is of interest.

Without going into (usually rather technical) details, let us only mention 
that under certain qualification of $\Rcal_1$, $\Rcal_2$ and $\Ecal_h$,
a boundedness (=\,numerical stability) and convergence
of a solution $(u_{\tau h},z_{\tau h})$ to the discrete state problem  
obtained by interpolation from values $(u_{\tau h}^k,z_{\tau h}^k)_{k=1}^{K}$ 
towards a weak solution $(u,z)$ to controlled state system for a fixed $\P$ can usually be shown at least in terms of subsequences
in various situations. A rather simple situations is if 
$\Rcal_2$, or possibly also $\Rcal_1$, is uniformly convex; this
corresponds to some viscosity.
In a special fully rate-independent case when $\Rcal_1=0$ and 
$\Rcal_2(\pi,u,z;\cdot)$ is 1-homogeneous and independent of $(u,z)$, 
such convergence was proved in \cite{roubiceka}; in this case the 
weak solutions are called local solutions. The
uniqueness is however not guaranteed in general. If 
$\Ecal(t,\P,\cdot,\cdot)$ is jointly uniformly convex, then
this uniqueness and even continuous dependence on $\P$ holds,
cf.\ \cite{mielke.2006,MieRou??RIS} for a survey of such situations. 
This is e.g.\ the case of linearized rate-independent plasticity with 
hardening. Sometimes, viscosity can help for this uniqueness.
This is the case of 
frictional normal-compliance contact of visco-elastic bodies
which, after a certain algebraic manipulation gets the 
structure with $\Ecal(t,\P,\cdot,\cdot)$ separately uniformly quadratic
with linear constraints in two-dimensions, cf.\ \cite{roubicek.2013},
or with conical constraints in three-dimensions.
The uniqueness of the response of the continuous problems
was shown in \cite{HaShSo01VNAQ}.

As usual, the convergence of solutions to \eqref{identification-disc}
towards solutions to \eqref{identification} is much more delicate
and it is a well-known fact that it cannot be expected 
unless the controlled state system in \eqref{identification} has a unique 
response or at least any solution to \eqref{Biot} can be 
attained by the discretized solutions, which is however usually not
granted unless the solution to \eqref{Biot} is unique. In any case, one needs 
to show the continuous convergence of the solution map 
$S_{\tau h}:\P\mapsto(u_{\tau h},z_{\tau h})$,
i.e. that $\tau\to0$ and $h\to0$ and $\WT \P\to\P$ implies 
$S_{\tau h}(\WT \P)\to S(\P)$. This is usually a relatively simple modification of the convergence for $\P$ fixed.
\end{remark}

\section{Notation and selected notions of variational analysis}
\label{sect-aux}
Having in mind the discrete problem with $\tau>0$ and $h>0$, we will 
use notation
\begin{subequations}\label{Label definition P Q}\begin{align}
&p_{\tau h}^k(\P,\WT  u,u,\WT  z) := \nabla_u
\mathcal{R}_1\Big(\P,\WT  u,\WT  z,\frac{u-\WT  u}\tau\Big)
+\nabla_u\mathcal{E}_h\big(k\tau,\P,u,\WT  z\big),\\&
q_{\tau h}^k(\P,\WT  u,u,\WT  z,z) := \nabla_z 
\mathcal{R}_2\Big(\P,\WT  u,\WT  z,\frac{z-\WT  z}\tau\Big)
+\nabla_z\mathcal{E}_h\big(k\tau,\P,u,z\big),
\\&\KK^k(\tilde z):=\big(K_1+\tilde z\big)\cap K_0^{k\tau},\ \ \text{ and}
\\&
J(\P,\hat u,\hat z):=\tau\,\mbox{\large$\sum$}_{k=1}^{K}
\,j(u^k,z^k)+H(\P)\ \ \text{ with }\ \hat u=(u^1,...,u^K)\ \text{ and }\ 
\hat z=(z^1,...,z^K).
\end{align}\end{subequations}
Since problems \eqref{2-minimizations} are convex, necessary optimality conditions are also sufficient and thus, taking into account structure \eqref{structure}, problem \eqref{identification-disc} can equivalently be written in the form:
\begin{align}\label{Label state equation inclusion original}
\left.
\!\!\begin{array}{ll}
\text{Minimize}\!\!& J(\P,u_{\tau h},z_{\tau h})\ \ \text{ with }\
u_{\tau h}:=(u_{\tau h}^1,...,u_{\tau h}^K)\ \text{ and }\ 
z_{\tau h}=(z_{\tau h}^1,...,z_{\tau h}^K)
\\[.5em]
\text{subject to}\!\!\!&0\in p_{\tau h}^k(\P,u_{\tau h}^{k-1},u_{\tau h}^k,z_{\tau h}^{k-1})
+\Nc_{\Lambda_\tau^k}(u_{\tau h}^k),\ \ \ \ \ \
k=1,\dots,K,\ \ \ \ \ \ u_{\tau h}^0=u_{0h},\!\!
\\[.5em]
&0\in q_{\tau h}^k(\P,u_{\tau h}^{k-1},u_{\tau h}^k,z_{\tau h}^{k-1},z_{\tau h}^k)
+\Nc_{\KK^k(z_{\tau h}^{k-1})}(z_{\tau h}^k),\ k=1,\dots,K,\ z_{\tau h}^0=z_{0h},\!\!\!
\\[.5em]
&\P\in\Pi_h\end{array}\right\}\!
\end{align}
with $K=T/\tau$. Defining the solution map $S_{\tau h}:\P\mapsto (u,z)$ implicitly via system 
(\ref{Label state equation inclusion original}), we may used the so-called 
implicit programming approach to rewrite problem 
\eqref{Label state equation inclusion original} equivalently into the form
\begin{align}\label{Label optimal control discretized general}
\text{Minimize}\ \ J(\P,S_{\tau h}(\P))\ \ 
\text{subject to}\ \ \P\in\Pi_h.
\end{align}
In the rest of the paper we will make use of the following standing 
assumption, which imply in particular the single-valuedness of the solution map $S_{\tau h}$:
\begin{itemize}
\item[(A1):]\ \ 
$\Ecal_h(t,\pi,u,\cdot)$ and $\Ecal_h(t,\pi,\cdot,z)$ are strictly 
convex,
\item[(A2):]\ \ 
$\Rcal_1(\P,u,z,\cdot)$ and $\Rcal_2(\P,u,z,\cdot)$ are convex,
\item[(A3):]\ \  
$p_{\tau h}^k(\P,\ut,\cdot,z)$ and $q_{\tau h}^k(\pi,\ut,u,\zt,\cdot)$ are 
continuously differentiable mappings, and
\item[(A4):]\ \ 
$\Lambda^k$ and $\KK^k(\zt)$ are closed convex sets.
\end{itemize}
Note that (A1)--(A3) implies that $p_{\tau h}^k(\P,\ut,\cdot,z)$ and 
$q_{\tau h}^k(\pi,\ut,u,\zt,\cdot)$ have  a positive definite Jacobian.

In what follows, we will fix time (and, if any, also space) discretization
and thus we will omit $\tau$ and $h$ in the following sections. The dimension of $U_h$, $Z_h$, and $\Pi_h$ will be respectively 
denoted by $N$, $M$, and $L$.

Before devising a (necessarily) quite complicated procedure to evaluate a 
gradient information for the nonsmooth functional $\P\mapsto J(\P,S(\P))$,
let us still briefly present basic notions from variational analysis which are 
essential for this paper. More information can be found 
in \cite{rockafellar.wets.1998} for finite-dimensional setting 
or in \cite{mordukhovich.2006} and \cite{clarke.ledyaev.stern.wolenski.1998} 
for the general infinite-dimensional case. 

All objects in this section are finite-dimensional. For sequence of sets 
$A_k\subset\R^n$ we define the Painlew\'e-Kuratowski upper limit as
$$
\Limsup_{k\to\infty} A_k=\big\{x\,|\ \exists x_k\!\in\!A_k,\ x
\text{ is an accumulation point of }\{x_k\}\big\}.
$$
Using this construction, we define for any $\bar{x}\in A$ the Bouligand tangent cone, Fr\'{e}chet normal cone and the limiting normal cone respectively as
$$
\aligned
\Tc_A(\xb)&=\big\{v\,|\ \exists v_k\to v,\ \lambda_k\searrow 0,\ 
\xb+\lambda_kv_k\in A\big\},
\\
\hat{\Nc}_A(\bar{x})&=(\Tc_A(\xb))^*
=\big\{x^*\,|\ \ssoucin{x^*}{v}\leq 0\text{ for all }v\in T_A(\xb)\big\},\\
\Nc_A(\bar{x})&=\Limsup_{x\stackrel{A}{\to}\bar{x}}\hat{\Nc}_A(x),
\endaligned
$$
where by $x\stackrel{A}{\to}\bar{x}$ we understand $x\to\bar{x}$ with $x\in A$. 
If $\Nc_A(\Pb)=\hat{\Nc}_A(\Pb)$, then we say that $\Pb$ is a regular point of 
$A$, otherwise it is a nonregular point.

To a function $f:\R^n\to \R\cup\{\infty\}$ we can define 
its subdifferential as
$$
\partial f(\bar{x})
=\big\{x^*\,|\ (x^*,-1)\in\Nc_{\epi f}(\bar{x},f(\bar{x}))\big\}.
$$
If $A$ happens to be convex, both normal cones coincide and are equal to the 
normal cone in the standard sense of convex analysis
$$
\Nc_A(\bar{x})=\big\{x^*\,|\ \ssoucin{x^*}{x-\bar{x}}\le0\text{ for all }x\in A\big\}\\
$$
and similarly, if $f$ is continuously differentiable at $\bar{x}$, then $\partial f(\bar{x})=\{\nabla f(\bar{x})\}$.

For a set-valued mapping $M:\R^n\rrarrow\R^m$ and for any $(\xb,\yb)\in \gph M$ we define the coderivative 
$\coder M(\bar{x},\bar{y}):\R^m\rrarrow\R^n$ at this point as
$$
\coder M(\bar{x},\bar{y})(y^*)=\big\{x^*\,|\ (x^*,-y^*)\in\Nc_{\gph M}(\bar{x},\bar{y})\big\}
$$
If $M$ is single-valued, we write only $\coder M(\bar{x})(y^*)$ instead of $\coder M(\bar{x},M(\bar{x}))(y^*)$. If $M$ is single-valued and smooth, then its coderivative amounts to the adjoint Jacobian.

A set-valued mapping $M:\R^n\rrarrow\R^m$ has the Aubin property at $(\bar{x},\bar{y})\in\gph M$ if there exist a constant $L$ and neighborhoods $U$ of $\bar{x}$ and $V$ of $\bar{y}$ such that for all $x, x'\in U$ the following inclusion holds true
$$
M(x)\cap V\subset M(x')+L\norm{x-x'}B(0,1),
$$
where $B(0,1)\subset\R^m$ is the unit ball. If $M$ is single-valued, then the Aubin property reduces exactly to locally Lipschitzian property.

\begin{example}
For a short illustration of the afore-mentioned objects, we use a simple 
example whose extension will be used later in the text. Consider 
$C:=[0,1]\subset\R$. Since $C$ is convex, both normal cones coincide and we have
$$
\gph\Nc_C=\gph\hat{\Nc}_C=\big(\{0\}{\times}\R_-\big)\cup\big([0,1]{\times}\{0\}\big)\cup\big(\{1\}{\times}\R_+\big).
$$
Fix now $(\xb,\yb)=(0,0)$ and compute
$$
\aligned
\hat{\Nc}_{\gph\Nc_C}(\xb,\yb)&=\R_-{\times}\R_+
\ \ \text{ and }\ \ 
\Nc_{\gph\Nc_C}(\xb,\yb)&=\big(\R_-{\times}\R_+\big)\cup\big(\{0\}{\times}\R_+\}\big)\cup\big(\R_-{\times}\{0\}\big).
\endaligned
$$
One can see that in this case limiting normal cone is strictly greater than Fr\'echet one. This means that $(0,0)$ is a nonregular point. Similarly, one can see that so is also $(1,0)$.
\end{example}

\section{Evaluation of a subgradient of $\P\mapsto J(\P,S(\P))$
         and first-order necessary optimality conditions for 
         \eqref{Label state equation inclusion original}}
\label{sect-opt-cond}

To solve problem \eqref{Label state equation inclusion original} or equivalently \eqref{Label optimal control discretized general} efficiently, we need to compute a subgradient information for the mapping 
$\P\mapsto J(\P,S(\P))$. Unfortunately, we cannot expect that $S$ is a differentiable function and 
thus, we need first to compute some kind of generalized derivative of $S$. 

We will work with the generalized differential calculus of Mordukhovich
\cite{mordukhovich.2006, rockafellar.wets.1998} and compute the limiting 
subdifferential of the objective in 
\eqref{Label optimal control discretized general}. To be able to do so, we 
first have to compute the so-called coderivative $\coder S$, which for 
continuously differentiable functions amounts to the adjoint Jacobian. First we state a lemma which links these two concepts together.

\begin{lemma}\label{Lemma noc general form} Consider the solution mapping 
$S:\P\mapsto(\ub,\zb)$ being implicitly defined by system 
\eqref{Label state equation inclusion original} and fix some 
$(\ub,\zb)= S(\Pb)$. Assume that $S$ is Lipschitz continuous on some neighborhood of 
$\Pb$ and that $J$ is continuously differentiable on some neighborhood of 
$(\Pb,\ub,\zb)$. Denoting $\tilde{J}(\P):=J(\P,S(\P))$, we have
$$
\partial \tilde{J}(\Pb) \subset \nabla_{\P}J(\Pb,\ub,\zb)
+\coder S(\Pb,\ub,\zb)(\nabla_uJ(\Pb,\ub,\zb),\nabla_zJ(\Pb,\ub,\zb)).
$$
\end{lemma}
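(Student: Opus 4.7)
The plan is to view $\tilde J$ as a composition: introduce the auxiliary mapping $F:\Pi\to\Pi\times U\times Z$ given by $F(\P):=(\P,S(\P))$, so that $\tilde J=J\circ F$. By the hypothesis on $S$, the mapping $F$ is single-valued and locally Lipschitz near $\Pb$, and by the hypothesis on $J$ the outer function is continuously differentiable at $F(\Pb)=(\Pb,\ub,\zb)$.

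Under these conditions the standard chain rule for the limiting subdifferential of a composition with a strictly differentiable outer term (see, e.g., Theorem~10.6 in \cite{rockafellar.wets.1998} or the corresponding statement in \cite{mordukhovich.2006}) yields
$$
\partial\tilde J(\Pb)\ \subset\ \coder F(\Pb)\bigl(\nabla J(\Pb,\ub,\zb)\bigr).
$$
Thus the only remaining task is to identify $\coder F(\Pb)(a,b,c)$ for arbitrary triples $(a,b,c)\in\R^L\times\R^N\times\R^M$, and then substitute $(a,b,c)=\nabla J(\Pb,\ub,\zb)$.

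The computation of $\coder F$ rests on the scalarization formula for single-valued Lipschitz maps, $\coder F(\Pb)(y^*)=\partial\langle y^*,F(\cdot)\rangle(\Pb)$. With $y^*=(a,b,c)$ the scalar product splits into $\langle a,\P\rangle+\langle(b,c),S(\P)\rangle$, i.e., into a smooth linear summand plus a Lipschitz one. The sum rule for the limiting subdifferential is an equality whenever one of the summands is $C^1$, which gives
$$
\coder F(\Pb)(a,b,c)=a+\partial\langle(b,c),S(\cdot)\rangle(\Pb)=a+\coder S(\Pb,\ub,\zb)(b,c),
$$
the last equality being a second application of scalarization, now in reverse. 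Plugging this back into the chain-rule inclusion with $(a,b,c)=(\nabla_\P J,\nabla_u J,\nabla_z J)(\Pb,\ub,\zb)$ yields exactly the asserted estimate. No real obstacle is expected in this argument; the genuine difficulty will come later, when $\coder S$ itself has to be evaluated from the implicit relations in \eqref{Label state equation inclusion original}.
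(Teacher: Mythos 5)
Your argument is correct and is essentially the paper's own proof written out in full: the paper simply cites a chain rule for the composite $J(\P,S(\P))$ (Theorem~7 of Mordukhovich--Nam--Yen) together with the exact ``smooth plus Lipschitz'' sum rule (Exercise~8.8 of Rockafellar--Wets), which is precisely what your scalarization-plus-sum-rule derivation reconstructs. No gap; the steps (chain rule with a $C^1$ outer function, scalarization of $\coder F$, exact sum rule, reverse scalarization to recover $\coder S$) are all valid in the finite-dimensional Lipschitz setting assumed here.
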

\begin{proof}
It follows directly from \cite[Theorem 7]{mordukhovich.nam.yen.2007} and 
\cite[Exercise 8.8]{rockafellar.wets.1998}.
\end{proof}

To obtain the necessary optimality conditions in the form of original data, we need to compute $\coder S$. This is conducted in the next lemma which will also be the basis for proving the Lipschitzian continuity of $S$ later in Corollary \ref{Corol S Lipschitz}.

\begin{lemma}\label{Lemma coderivative estimate}
Consider the setting of the solution mapping $S:\P\mapsto(\ub,\zb)$ being implicitly defined by system \eqref{Label state equation inclusion original} and fix some $(\ub,\zb)= S(\Pb)$. Assuming (A1)--(A4), the upper estimate of $\coder S(\Pb,\ub,\zb)(u^*,z^*)$ is the collection of all quantities
\begin{equation}\label{Label coderivative upper estimate}
-\sum_{k=1}^K(\nabla_\P p^k)^\top\beta^k - \sum_{k=1}^K(\nabla_\P q^k)^\top\delta^k
\end{equation}
such that for $k=1,\dots,K$ the adjoint equations 
\begin{subequations}\label{Label coderivative adjoint equations}
\begin{align}
\label{Label coderivative adjoint equations 1} -u^{*k}&=\alpha^k - (\nabla_u p^k)^\top\beta^k - (\nabla_u q^k)^\top\delta^k - (\nabla_{\ut} p^{k+1})^\top\beta^{k+1} - (\nabla_{\ut} q^{k+1})^\top\delta^{k+1},\ \text{ and}
\\
\label{Label coderivative adjoint equations 2} -z^{*k}&=\gamma^k - (\nabla_z q^k)^\top\delta^k - (\nabla_{\zt} p^{k+1})^\top\beta^{k+1} - (\nabla_{\zt} q^{k+1})^\top\delta^{k+1}
\end{align}
\end{subequations}
with the terminal conditions $\beta^{K+1}=0$ and $\delta^{K+1}=0$ are fulfilled. For the multipliers $\alpha, \beta, \gamma, \delta$ we have the relations
\begin{subequations}\label{Label coderivative multipliers basic form}\label{Label noc multipliers basic form}
\begin{align}
\label{Label coderivative multipliers basic form 1} \bpm \alpha^k
\\\beta^k\epm&\in\Nc_{\gph\Nc_{\Lambda^k}}(\ub^k,-p^k(\Pb,\ub^{k-1},\ub^k,\zb^{k-1})),
\ \ \ \text{ and}\\\label{Label coderivative multipliers basic form 2}
\bpm \gamma\\\delta\epm&\in\Nc_{\gph Q}(\zb, -q(\Pb,\ub,\zb)),
\end{align}
\end{subequations}
where $\gamma=(\gamma^1,...,\gamma^K)$ and $\delta=(\delta^1,...,\delta^K)$
and where, for $u=(u^1,...,u^K)$ and $z=(z^1,...,z^K)$, we have defined
$$
\begin{array}{rll}
q(\P,u,z)&:=\bpm q^1(\P,u^0,u^1,z^0,z^1)\\\dots\\ q^K(\P,u^{K-1},u^K,z^{K-1},z^K)\epm&\!\!: \R^L\times\R^{KN}\times\R^{KM}\to\R^{KM},\ \ \ \text{ and}
\\
Q(z)&:= \bigtimes_{k=1}^K\Nc_{\KK^k(z^{k-1})}(z^k)&\!\!:\R^{KM}\rrarrow\R^{KM}.
\end{array}
$$
\end{lemma}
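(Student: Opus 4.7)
The plan is to express $\gph S$ as the preimage of a normal-cone graph under a smooth map, apply Mordukhovich's calculus rule for limiting normals of preimages, and then transpose and collect terms by time level. Concretely, I would assemble the $K$ pairs of equations in \eqref{Label state equation inclusion original} into a single generalized equation. Setting $F=(p^1,\ldots,p^K,q^1,\ldots,q^K)$ (with $F_1,F_2$ for its $u$- and $z$-blocks) and $A:=\prod_{k=1}^K\Lambda^k$, the constraint part of \eqref{Label state equation inclusion original} is equivalent to $-F(\P,u,z)\in\Nc_A(u)\times Q(z)$, whence
\[
\gph S=\phi^{-1}(\Omega),\quad\phi(\P,u,z):=\big(u,-F_1(\P,u,z),z,-F_2(\P,u,z)\big),\quad\Omega:=\gph\Nc_A\times\gph Q.
\]
Since $\P^*\in\coder S(\Pb,\ub,\zb)(u^*,z^*)$ iff $(\P^*,-u^*,-z^*)\in\Nc_{\gph S}(\Pb,\ub,\zb)$, the task reduces to estimating $\Nc_{\gph S}$.

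Next I would apply the preimage rule of Mordukhovich \cite{mordukhovich.2006}, obtaining $\Nc_{\gph S}(\Pb,\ub,\zb)\subset\nabla\phi(\Pb,\ub,\zb)^\top\Nc_\Omega(\phi(\Pb,\ub,\zb))$ under the qualification $\Nc_\Omega(\phi(\cdot))\cap\Ker\nabla\phi(\cdot)^\top=\{0\}$. I would verify this QC using the block lower-triangular structure of $\nabla_{(u,z)}F$ inherited from the semi-implicit scheme, with positive-definite diagonal blocks $\nabla_u p^k$ and $\nabla_z q^k$ noted after (A4). Since $\gph\Nc_A=\prod_k\gph\Nc_{\Lambda^k}$, its limiting normal cone decomposes as a product, giving the pointwise inclusions \eqref{Label coderivative multipliers basic form 1}. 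No such decomposition is available for $Q$ because $\KK^k(z^{k-1})$ couples $z^k$ to $z^{k-1}$; the joint inclusion \eqref{Label coderivative multipliers basic form 2} must therefore be kept as a whole.

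Finally I would transpose $\nabla\phi$ applied to the multiplier tuple $(\alpha,\beta,\gamma,\delta)$ and collect contributions variable by variable. For each $k$, the vector $u^k$ enters $p^k,q^k$ via the ``$u$'' slot and $p^{k+1},q^{k+1}$ via the ``$\ut$'' slot; summing these four partial-derivative contributions with $\alpha^k$ from the normal-cone slot reproduces \eqref{Label coderivative adjoint equations 1}. An analogous bookkeeping for $z^k$ (entering $q^k$ via ``$z$'' and $p^{k+1},q^{k+1}$ via ``$\zt$'') yields \eqref{Label coderivative adjoint equations 2}; the absence of a $(K{+}1)$-st equation forces the terminal conditions $\beta^{K+1}=\delta^{K+1}=0$. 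The aggregated $\P$-contributions then sum to \eqref{Label coderivative upper estimate}, finishing the estimate.

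The main obstacle I anticipate is the nonregularity of $\gph Q$: as illustrated by the elementary example in Section~\ref{sect-aux}, the limiting normal cone to the graph of a normal-cone mapping is strictly larger than the Fr\'echet one at kink points, so the preimage rule only yields an upper estimate that cannot be sharpened without the refined machinery of \cite{adam.cervinka.pistek.2014} invoked later in the paper. Beyond this, the remaining work is careful componentwise accounting of the chain rule across the $K$ time levels, which is routine once the dependence pattern of $p^k,q^k$ on their arguments is fixed.
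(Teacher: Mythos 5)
Your overall architecture coincides with the paper's: both reduce the claim to an upper estimate of $\Nc_{\gph S}$ via the representation of $\gph S$ as the preimage of $\gph\Nc_A\times\gph Q$ under a smooth map, invoke \cite[Theorem 6.14]{rockafellar.wets.1998}, decompose the $\Lambda$-part by the product rule, and then do the time-level bookkeeping that produces \eqref{Label coderivative adjoint equations} and the terminal conditions. That bookkeeping part of your argument is fine.

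The genuine gap is in your justification of the qualification condition $\Nc_\Omega(\phi(\cdot))\cap\Ker\nabla\phi(\cdot)^\top=\{0\}$. You claim it follows from the block lower-triangular structure of $\nabla_{(u,z)}F$ together with the positive definiteness of $\nabla_u p^k$ and $\nabla_z q^k$. Written out, the condition requires that any multipliers $(\alpha,\beta,\gamma,\delta)$ obeying \eqref{Label coderivative multipliers basic form} and the \emph{homogeneous} adjoint system (i.e.\ \eqref{Label coderivative adjoint equations} with $u^*=z^*=0$) must vanish. The standard way to exploit positive definiteness is to pair the $k$-th adjoint equation with $\delta^k$ (resp.\ $\beta^k$) and use a sign condition $\gamma^{k\top}\delta^k\le0$; but for the coupled multifunction $Q$ this sign condition \emph{fails} in general --- elements of $\Nc_{\gph Q}(\zb,-q)$ can have $\gamma^{k\top}\delta^k>0$, as the paper explicitly points out before Corollary~\ref{Corol S Lipschitz}. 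Establishing the vanishing of the multipliers is precisely the content of that corollary, and it needs the exact description of $\Nc_{\gph Q}$ from Proposition~\ref{Proposition normal cone graph Q} (which rests on \cite{adam.cervinka.pistek.2014}) plus a delicate backward induction with the auxiliary property $(P^k)$; it is not a routine consequence of triangularity. Note also the ordering of the paper: that dual analysis comes \emph{after} Lemma~\ref{Lemma coderivative estimate} and uses its conclusion, so your route would have to import a substantial later argument into the proof of the lemma. The paper sidesteps this entirely by verifying a \emph{primal} sufficient condition instead: the partially linearized generalized equation decouples recursively (solve for $u^k$, then $z^k$), each step being a strongly monotone variational inequality over a closed convex set, so the linearized solution map $M$ is single-valued and locally Lipschitz by \cite[Corollary 3D.5]{dontchev.rockafellar.2009}; this is strong regularity in Robinson's sense, and \cite[Proposition 3.2]{outrata.2000} then yields the upper estimate with no direct dual verification needed. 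You should either adopt that strong-regularity route or supply the full multiplier analysis of Corollary~\ref{Corol S Lipschitz} at this point.
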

\begin{proof}
Similarly to $q$ and $Q$, we define
$$
\begin{array}{rll}
p(\P,u,z)&:=\bpm p^1(\P,u^0,u^1,z^0)\\ \dots\\ p^K(\P,u^{K-1},u^K,z^{K-1})\epm&:\R^L\times\R^{KN}\times\R^{KM}\to\R^{KN},\ \ \ \text{ and}\\
P(u)&:=
\bigtimes_{k=1}^K \Nc_{\Lambda^k}(u^k)&:\R^{KN}\rrarrow\R^{KN}\\
\end{array}
$$

We define the following partially linearized mapping
$$
M(\mu,\nu):=\left\{(\P,u,z)\left|\, \aligned \mu&\in p(\bar{\P},\bar{u},\bar{z})+\nabla_u p(\bar{\P},\bar{u},\bar{z})(u{-}\bar{u})+\nabla_zp(\bar{\P},\bar{u},\bar{z})(z{-}\bar{z})+P(u)\\
\nu&\in q(\bar{\P},\bar{u},\bar{z})+\nabla_uq(\bar{\P},\bar{u},\bar{z})(u{-}\bar{u})+\nabla_zq(\bar{\P},\bar{u},\bar{z})(z{-}\bar{z})+Q(z)\endaligned\right.\right\}
$$
and show that it is single--valued and locally Lipschitz around $(0,0)$. Indeed, the relations defining $M$ read for $k=1,\dots,K$
$$
\aligned
\mu^k&\in p^k(\Pb,\ub^{k-1},\ub^k,\zb^{k-1}) + \nabla_u p^k(\Pb,\ub^{k-1},\ub^k,\zb^{k-1})(u^k-\ub^k) \\ &+ \nabla_{\ut}p^k(\Pb,\ub^{k-1},\ub^k,\zb^{k-1})(u^{k-1}{-}\ub^{k-1})+ \nabla_{\zt} p^k(\Pb,\ub^{k-1},\ub^k,\zb^{k-1})(z^{k-1}{-}\zb^{k-1}) + \Nc_{\Lambda^k}(u^k),\\
\nu^k&\in q^k(\Pb,\ub^{k-1},\ub^k,\zb^{k-1},\zb^k) +
\nabla_uq^k(\Pb,\ub^{k-1},\ub^k,\zb^{k-1}\zb^k)(u^k{-}\ub^k)\\
&+ \nabla_{\ut} q^k(\Pb,\ub^{k-1},\ub^k,\zb^{k-1},\zb^k)(u^{k-1}{-}\ub^{k-1})
+ \nabla_z q^k(\Pb,\ub^{k-1},\ub^k,\zb^{k-1})(z^k{-}\zb^k)\\
&+\nabla_{\zt}q^k(\Pb,\ub^{k-1},\ub^k,\zb^{k-1})(z^{k-1}{-}\zb^{k-1}) + \Nc_{\KK^k(z^{k-1})}(z^k)
\endaligned
$$
with $u^0=\ub^0$ and $z^0=\zb^0$. Since the first inclusion is solved for $u^k$ and the second one for $z^k$, we obtain that $M$ is single-valued due to (A1)--(A4). By virtue of \cite[Corollary 3D.5]{dontchev.rockafellar.2009} we further obtain that $M$ is Lipschitz continuous around $\Pb$, so that the system defining $S$ is strongly regular (in the sense of Robinson \cite{robinson.1980}) at $(0,0,\bar{\P},\ub,\zb)$.

This enables us to use \cite[Proposition 3.2]{outrata.2000} and \cite[Theorem 6.14]{rockafellar.wets.1998} to obtain, with $I$ being the identity matrix, that
$$
\Nc_{\gph S}(\Pb,\ub,\zb)\subset\bpm 0&I&0\\ -\nabla_\P p(\bar{\P},\bar{u},\bar{z})&-\nabla_up(\bar{\P},\bar{u},\bar{z})&-\nabla_zp(\bar{\P},\bar{u},\bar{z})\\ 0&0&I\\ -\nabla_\P q(\bar{\P},\bar{u},\bar{z})&-\nabla_uq(\bar{\P},\bar{u},\bar{z})&-\nabla_zq(\bar{\P},\bar{u},\bar{z})\epm^{\!\!\!\top}
\bpm \alpha\\\beta\\\gamma\\\delta\epm.
$$
with some $\alpha,\beta\in\R^{KN}$ and $\gamma,\delta\in\R^{KM}$ satisfying
$$
\bpm \alpha\\\beta\epm\in\Nc_{\gph P}(\bar{u},-p(\bar{\P},\bar{u},\bar{z}))\ 
\ \ \text{ and }\ \ \ 
\bpm \gamma\\\delta\epm\in\Nc_{\gph Q}(\bar{z},-q(\bar{\P},\bar{u},\bar{z})).
$$
Applying the product rule for normal cones \cite[Proposition 6.41]{rockafellar.wets.1998} we obtain the statement of the lemma.
\end{proof}

If $\Lambda^k$ is a polyhedral set, then $\Nc_{\gph\Lambda^k}(\cdot)$ can be computed due to \cite[Theorem 2]{dontchev.rockafellar.1996} or \cite[Proposition 3.2]{henrion.romisch.2007}. For the computation of $\Nc_{\gph Q}(\cdot)$, we will consider two cases of $\KK^k$, specifically
\begin{subequations}\label{Label Omega definition}
\begin{align}
\label{Label Omega definition 1} \KK^k(z^{k-1})&=\R^M\ \ \ \text{ or}\\
\label{Label Omega definition 2} \KK^k(z^{k-1})&=\{z\in\R^M\,|\, 0\leq z\leq z^{k-1}\}
\end{align}
\end{subequations}
where in \eqref{Label Omega definition 2}, the inequality is understood 
componentwise. 
The former case \eqref{Label Omega definition 1} corresponds to 
$K_0^t=K_1=\R^M$, while 
the latter case \eqref{Label Omega definition 2} corresponds to $K_0^t=\R_+^M$ and $K_1=\R_-^M$. The former case is simple because from 
\eqref{Label coderivative multipliers basic form 2} we immediately obtain 
that $\gamma^k=0$ and $\delta^k\in\R^M$.
For the analysis of the more complicated case \eqref{Label Omega definition 2} we recall the 
definition of $Q$ and define its counterpart $\partition$ for a single time 
instant
\begin{align*}
Q(z) &= \{v\in\R^{KM}\,|\ v^k\in\Nc_{[0,z^{k-1}]}(z^k),\ k=1,\dots,K\},\\
\partition(\zt) &:= \{(z,v)\in\R^M\times\R^M\,|\ v\in\Nc_{[0,\zt]}(z)\}.
\end{align*}
This case is more involved than the previous one, because in $ Q $ one has to 
do with normal cones to moving sets whereby (components of) $ z $ arise 
simultaneously both as the arguments as well as parameters specifying the 
movement of the constraint sets. Such a situation occurs typically in 
quasivariational inequalities and has been studied, e.g., in 
\cite{mordukhovich.outrata.2007}. Unfortunately, the results of 
\cite{mordukhovich.outrata.2007} cannot be directly applied here because the 
set
$$
\KK^{k}(0) = \{ 0\}
$$
does not satisfy even the Mangasarian--Fromovitz constraint qualification. 

As we have mentioned in the introduction, it would be possible to use standard 
calculus rules to obtain a formula for $\Nc_{\gph Q}$ based on multiple computations of $\Nc_{\gph \partition}$. 
The graph of $\partition$ can easily be visualized in Figure \ref{Figure gph Qt} and thus, $\Nc_{\gph \partition}$ can be computed by analyzing $8$ parts of $\gph\partition$ separately, see definition of $\partition_i$ below. However, when computing $\Nc_{\gph Q}$ on the basis of $\Nc_{\gph \tilde{Q}}$ and the chain rule from \cite[Theorem 4.1]{henrion.jourani.outrata.2002} one obtains only an upper estimate and not equality.
\begin{figure}[!ht]
\begin{center}
\includegraphics[width=0.7\textwidth]{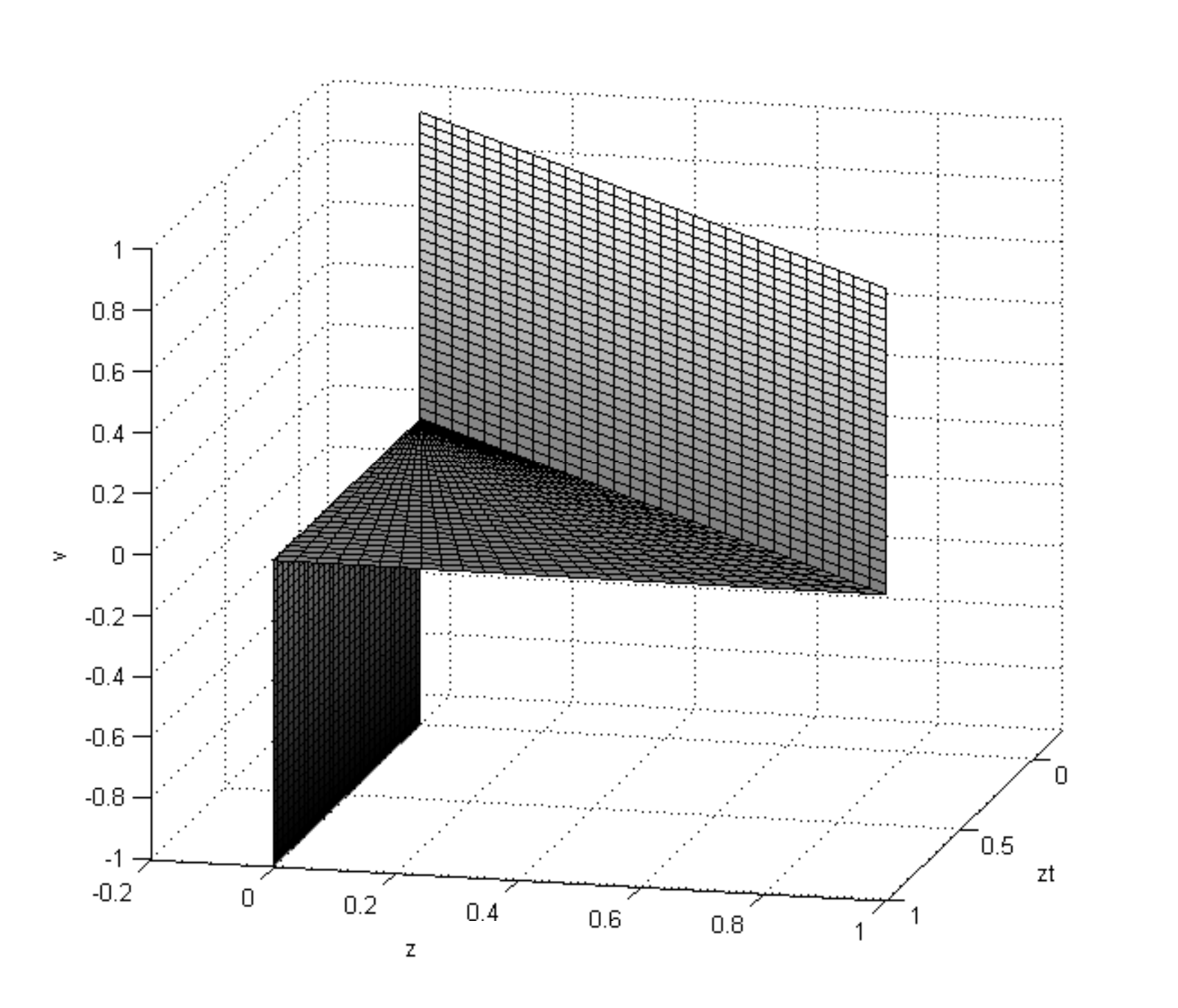}
\end{center}
\vspace*{-2em}
\caption{Visualization of $\gph \partition$.}
\label{Figure gph Qt}
\end{figure}

That is why we make use of \cite{adam.cervinka.pistek.2014} where formulas for Fr\'echet and limiting normal cones to a finite union of convex polyhedra have been derived and then applied to a special structure arising in time dependent problems. For simplicity, we will show the result only for the simplest case of $M=1$. However, the generalization to a more--dimensional space is straightforward and can be conducted in componentwise way.

In the following text we will assume that $i^0=1$. Define now the following sets
\begin{align*}
\partition_1 & = \left \{ (\zt,z,v) \in \R^3 \left | \, \zt \in (0,\infty), z =\zt, v \in (0,\infty) \right . \right \},\\
\partition_2 & = \left \{ (\zt,z,v) \in \R^3 \left | \, \zt \in (0,\infty), z = \zt, v=0 \right . \right \},\\
\partition_3 & = \left \{ (\zt,z,v) \in \R^3 \left | \, \zt \in (0,\infty), z \in (0,\zt), v=0 \right . \right \},\\
\partition_4 & = (0,\infty) \times \{0\} \times \{0\},\\
\partition_5 & = (0,\infty) \times \{0\} \times (-\infty,0),\\
\partition_6 & = \{0\} \times \{ 0 \} \times (-\infty,0),\\
\partition_7 & = \{ 0 \} \times \{0\} \times \{ 0 \},\\
\partition_8 & = \{0\} \times \{0\} \times (0,\infty).
\end{align*}
It is not difficult to show that $\cup_{i=1}^8\partition_i = \gph\partition$. Moreover, $\{\partition_i|\, i=1,\dots,8\}$ forms the so--called normally admissible partition of $\gph\partition$ as defined in \cite{adam.cervinka.pistek.2014}. Now, define the following index sets
$$
\aligned
\Theta &= 
\left\{ (i^1,\dots,i^K) \left| \begin{array}{l}
i^k\in\{1,\dots,8\}\\                                                
i^{k-1}\in\{1,2,3\}\implies i^k\in\{1,2,3,4,5\}\\
i^{k-1}\in\{4,5,6,7,8\}\implies i^k\in\{6,7,8\}\\
\end{array} \right. \right\}\qquad \text{and} \\
I(s) &=
\left\{( i^1,\dots,i^K)\in\Theta \left|\ \begin{array}{ll}
s^k = 1 \implies i^k = 1, & s^k = 5 \implies i^k = 5\\
s^k = 2 \implies i^k\in \{1,2,3\}, & s^k = 6 \implies i^k \in \{5,6\}\\
s^k = 3 \implies i^k = 3, & s^k = 7 \implies i^k \in \{1,\dots,8\}\\
s^k = 4 \implies i^k \in \{3,4,5\}, & s^k = 8 \implies i^k \in \{1,8\}
\end{array} \right.\right\},
\endaligned
$$
where we assume that $s=(s^1,\dots,s^K)\in\{1,\dots,8\}^K$ and all relations are required to hold for all $k=1,\dots,K$. Further define
$$
\partitionbig_{i}:=\left\{ (z,v) \in \R^{2K} \left| \, (z^{k-1},z^k,v^k)\in \partition_{i^k},\ k=1,\dots,K \right. \right \}.
$$
As shown in \cite{adam.cervinka.pistek.2014}, we obtain that $\cup_{i\in\Theta} \partitionbig_i=\gph Q$ and that $\{ \partitionbig_i|\ i\in\Theta\}$ forms a normally admissible partition of $\gph Q$. Now, we may state the result concerning the computation of $\Nc_{\gph Q}(\zb,\vb)$, which replaces the computation of normal cone to a nonconvex sets by the computation of multiple normal cones to convex sets.

\begin{proposition}\cite[Section 4]{adam.cervinka.pistek.2014}\label{Proposition normal cone graph Q}
Fix any $(\zb,\vb)\in\gph Q$ and denote by $\sbar$ the index of the unique component $\partitionbig_{\sbar}$ such that $(\zb,\vb)\in\partitionbig_{\sbar}$. Then
\begin{align*}
\Nc_{\gph Q}(\zb,\vb) = \bigcup_{s\in I(\sbar)} \bigcap_{i\in I(s)} \Nc_{\cl \partitionbig_i}(\partitionbig_s),
\end{align*}
where $\Nc_{\cl\partitionbig_i}(\partitionbig_s)$ denotes the common value $\Nc_{\cl\partitionbig_i}(z,v)$ for any $(z,v) \in \partitionbig_s$. For any $s\in\Theta$ and $i\in I(s)$, this value can be computed as
$$
\Nc_{\cl \partitionbig_i}(\partitionbig_s) = \left \{ \left. \bpm \mu^1+\tilde{\mu}^1\\ \vdots\\ \mu^K+\tilde{\mu}^K \\ \nu^1\\ \dots \\ \nu^K \epm \in \R^{2K} \right |
\aligned \bpm \tilde{\mu}^{k-1}\\ \mu^k \\ \nu^k\epm &\in \Nc_{\cl \partition_{i^k}}(\partition _{s^k}),\ k=1,\dots,K\\ \tilde{\mu}^K &= 0 \endaligned  \right \}.
$$
\end{proposition}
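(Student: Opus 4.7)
The plan is to invoke the machinery from \cite{adam.cervinka.pistek.2014}, which provides a formula for the limiting normal cone to a set expressed as a finite union of convex polyhedra via what the authors call a \emph{normally admissible partition}. The main task is to verify that $\{\partitionbig_i : i \in \Theta\}$ is such a partition and that the target point $(\zb,\vb)$ lies in exactly one piece $\partitionbig_{\sbar}$, so that the general result of \cite{adam.cervinka.pistek.2014} can be applied verbatim to yield the first formula $\Nc_{\gph Q}(\zb,\vb) = \bigcup_{s\in I(\sbar)}\bigcap_{i\in I(s)}\Nc_{\cl\partitionbig_i}(\partitionbig_s)$.

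First I would check that $\{\partition_i\}_{i=1}^8$ is a normally admissible partition of $\gph\partition$, which is elementary from the picture in Figure~\ref{Figure gph Qt}, and then lift this through the time-coupling to $\{\partitionbig_i\}_{i\in\Theta}$. The admissibility constraints encoded in the definition of $\Theta$---namely that $i^{k-1}\in\{1,2,3\}$ forces $i^k\in\{1,2,3,4,5\}$, and $i^{k-1}\in\{4,\dots,8\}$ forces $i^k\in\{6,7,8\}$---are precisely what ensures that the collection is pairwise disjoint with the right closure-and-face compatibility, i.e., that consecutive triples $(z^{k-1},z^k,v^k)$ and $(z^k,z^{k+1},v^{k+1})$ share a consistent middle coordinate $z^k$. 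Next I would verify that $I(\sbar)$ as defined in the statement coincides with $\{i\in\Theta \mid \partitionbig_{\sbar}\subset\cl\partitionbig_i\}$, which by componentwise inspection reduces to $i^k \in I(\sbar^k)$ for each $k$.

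To establish the closed-form expression for each $\Nc_{\cl\partitionbig_i}(\partitionbig_s)$, I would exploit the observation that $\cl\partitionbig_i$ is the preimage of the Cartesian product $\prod_{k=1}^K \cl\partition_{i^k}$ under the linear map $A\colon (z,v)\mapsto\big((z^{k-1},z^k,v^k)\big)_{k=1}^K$, with $z^0$ treated as a fixed initial datum. Since all sets in play are convex polyhedra, the preimage rule for normal cones of convex sets yields $\Nc_{\cl\partitionbig_i}(z,v) = A^\top \prod_{k=1}^K \Nc_{\cl\partition_{i^k}}(z^{k-1},z^k,v^k)$. Denoting a generic multiplier in the $k$-th factor by $(\tilde\mu^{k-1},\mu^k,\nu^k)$, a direct computation of $A^\top$ shows that each $z^k$ (for $1\le k\le K-1$) collects the contribution $\mu^k$ from the $k$-th factor's middle slot and the contribution $\tilde\mu^k$ from the $(k+1)$-st factor's first slot, giving $\mu^k+\tilde\mu^k$; the coordinate $v^k$ receives only $\nu^k$ from a single factor. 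For $k=K$ there is no subsequent factor, which forces the terminal condition $\tilde\mu^K=0$, while $\tilde\mu^0$ dualizes the fixed datum $z^0$ and therefore drops out of the output.

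The main obstacle I anticipate is not the adjoint computation but the combinatorial bookkeeping needed to verify (i) that the closures $\cl\partitionbig_i$ meet the way required by the framework of \cite{adam.cervinka.pistek.2014}, and (ii) that the index sets $\Theta$ and $I(s)$ precisely reflect the admissibility conditions of that reference (in particular, that the definition of $I(s)$ correctly records exactly those cells whose closure contains $\partitionbig_s$). Once those compatibility checks are in place, the proposition follows by direct invocation of \cite[Section~4]{adam.cervinka.pistek.2014} together with the preimage-rule computation sketched above.
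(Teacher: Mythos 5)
Your proposal is correct and follows essentially the same route as the paper, which offers no independent proof of this proposition but simply imports it from \cite[Section 4]{adam.cervinka.pistek.2014}; your outline of the required verifications (normal admissibility of the partition $\{\partitionbig_i\}_{i\in\Theta}$, the identification of $I(s)$ with the cells whose closures contain $\partitionbig_s$, and the adjoint/preimage computation producing the $\mu^k+\tilde\mu^k$ structure with $\tilde\mu^K=0$) matches exactly how that reference is applied here. If anything, your sketch is more explicit than the paper, which relies on the citation alone.
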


Now we have enough information to prove the Lipschitz continuity of $S$ for both cases in \eqref{Label Omega definition}. Due to assumptions (A1)--(A3) and \cite[Proof of Theorem 2]{dontchev.rockafellar.1996} we obtain that if a pair $(\alpha^k,\beta^k)$ satisfies \eqref{Label coderivative multipliers basic form 1}, then we have $\alpha^{k\top}\beta^k\leq 0$. However, for $(\gamma,\delta)$ satisfying \eqref{Label coderivative multipliers basic form 2}, it may happen that $\gamma^{k\top}\delta^k>0$ (see formula \eqref{Label mu nu property} below). Nevertheless, we are able to overcome this problem by making use of the specific structure of $\gph Q$.

\begin{corol}\label{Corol S Lipschitz}
In the setting of Lemma \ref{Lemma coderivative estimate} assume that $\KK^k$ is defined via \eqref{Label Omega definition 1} or \eqref{Label Omega definition 2}. Fix some $(\ub,\zb)= S(\Pb)$. Then $S$ is Lipschitz continuous around $\Pb$.
\end{corol}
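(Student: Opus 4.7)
The plan is to apply the Mordukhovich criterion: since $S$ is already known to be single-valued, its Aubin (i.e.\ locally Lipschitz) property at $\Pb$ follows once we verify that $\coder S(\Pb,\ub,\zb)(0,0)=\{0\}$. Combined with the upper estimate in Lemma~\ref{Lemma coderivative estimate}, this reduces to the implication: whenever multipliers $(\alpha,\beta,\gamma,\delta)$ satisfy the adjoint system~\eqref{Label coderivative adjoint equations} with $u^{*}=0$ and $z^{*}=0$ together with the inclusions~\eqref{Label coderivative multipliers basic form}, then $\beta^{k}=0$ and $\delta^{k}=0$ must hold for every $k$; this kills the candidate expression~\eqref{Label coderivative upper estimate}.

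I would proceed by backward induction on $k$, starting from the terminal conditions $\beta^{K+1}=\delta^{K+1}=0$. At each step one pairs the first adjoint equation with $\beta^{k}$ and the second with $\delta^{k}$. The diagonal quadratic forms $\beta^{k\top}(\nabla_{u}p^{k})\beta^{k}$ and $\delta^{k\top}(\nabla_{z}q^{k})\delta^{k}$ are bounded below by $c(\|\beta^{k}\|^{2}+\|\delta^{k}\|^{2})$ thanks to positive definiteness of $\nabla_{u}p^{k}$ and $\nabla_{z}q^{k}$, granted by (A1)--(A3). The polyhedrality of $\Lambda^{k}$ yields $\alpha^{k\top}\beta^{k}\leq 0$ via~\cite[Proof of Theorem~2]{dontchev.rockafellar.1996}, as recalled just before the statement. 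The induction hypothesis eliminates the $(k{+}1)$-level coupling terms, so the remaining decisive quantity is the sign of $\sum_{k}\gamma^{k\top}\delta^{k}$.

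Case~\eqref{Label Omega definition 1} is immediate: $\KK^{k}\equiv\R^{M}$ makes $Q\equiv 0$ and then~\eqref{Label coderivative multipliers basic form 2} forces $\gamma^{k}=0$. Case~\eqref{Label Omega definition 2}, where a straightforward chain-rule estimate in the spirit of~\cite{henrion.jourani.outrata.2002} would only yield an upper bound, requires Proposition~\ref{Proposition normal cone graph Q}: I would parametrise $(\gamma,\delta)\in\Nc_{\gph Q}(\zb,-q(\Pb,\ub,\zb))$ by writing $\gamma^{k}=\mu^{k}+\tilde\mu^{k}$, $\delta^{k}=\nu^{k}$, with triples $(\tilde\mu^{k-1},\mu^{k},\nu^{k})\in\Nc_{\cl\partition_{i^{k}}}(\partition_{s^{k}})$ and the terminal convention $\tilde\mu^{K}=0$. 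A direct inspection of the eight polyhedral pieces $\partition_{1},\dots,\partition_{8}$ visible in Figure~\ref{Figure gph Qt}, together with the structure of the admissible index sets $I(\sbar)$ and $I(s)$, provides the sign relations on each triple; an appropriate reindexing that exploits the terminal $\tilde\mu^{K}=0$ then telescopes these contributions into a global nonpositivity estimate sufficient to close the induction.

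The hardest step will be this combinatorial sign analysis. Because the multipliers at adjacent time levels couple through the triples $(\tilde\mu^{k-1},\mu^{k},\nu^{k})$, the pointwise failure $\gamma^{k\top}\delta^{k}>0$ flagged just above becomes harmless only after a careful telescopic summation; it is precisely here that the exact normal-cone formula of Proposition~\ref{Proposition normal cone graph Q}, rather than a generic chain-rule upper estimate, is indispensable.
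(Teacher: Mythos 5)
Your overall strategy coincides with the paper's: the Mordukhovich criterion reduces everything to $\coder S(\Pb,\ub,\zb)(0,0)=\{0\}$, case \eqref{Label Omega definition 1} is disposed of immediately, and case \eqref{Label Omega definition 2} is handled by a backward induction that uses Proposition \ref{Proposition normal cone graph Q} to parametrise $(\gamma,\delta)\in\Nc_{\gph Q}(\zb,-q(\Pb,\ub,\zb))$ by triples $(\tilde\mu_i^{k-1},\mu_i^k,\nu_i^k)$ with $\tilde\mu_i^K=0$. Up to that point the proposal is sound and matches the paper.

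The gap is in how you propose to neutralise the possible positivity of $\gamma^{k\top}\delta^k$. A ``global nonpositivity estimate'' for $\sum_k\gamma^{k\top}\delta^k$ obtained by telescoping cannot close the argument: the adjoint equations couple level $k$ to level $k+1$ through the unsigned terms $(\nabla_{\ut}p^{k+1})^\top\beta^{k+1}$, $(\nabla_{\zt}q^{k+1})^\top\delta^{k+1}$, etc., so summing the paired equations over all $k$ leaves uncontrolled cross terms; the only way to eliminate them is to know $\beta^{k+1}=\delta^{k+1}=0$ \emph{before} treating level $k$, which forces a per-step, not global, conclusion. Hence what is really needed is the pointwise inequality $\gamma^{k\top}\delta^k\le 0$ at each stage of the backward induction, and---as the paper stresses just before the corollary---this inequality fails for general elements of $\Nc_{\gph Q}$. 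The paper's resolution is to strengthen the induction hypothesis: besides $\gamma^{k+1}=\delta^{k+1}=0$ one propagates an auxiliary sign property $(P^{k+1})$ on the components $\mu_i^{k+1}$ (namely $\mu_i^{k+1}=0$ when $s^{k+1}\in\{1,2\}$, and $\mu_i^{k+1}\ge0$ along chains $s^j=4$, $s^{j+1}=\cdots=s^{k+1}=8$), together with a careful selection of the index $i$ from a restricted index set $I\subset I(s)$; only this extra information pins down the sign of $\tilde\mu_i^k$ in the problematic configurations $s^{k+1}\in\{1,2,6,7,8\}$ with $s^k\in\{2,3,4\}$. A ``direct inspection of the eight pieces'' at a single time level does not supply this invariant, so the decisive combinatorial step of the proof is missing rather than merely deferred.
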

\begin{proof}
Without loss of generality we may assume that $M=1$. Since $S$ is single--valued, it is locally Lipschitz around $\Pb$ if and only if it has the so--called Aubin property around $(\Pb,\ub,\zb)$. Moreover, this property is according to \cite[Theorem 9.40]{rockafellar.wets.1998} equivalent to
\begin{equation}\label{Label S Mordukhovich criterion}
\coder S(\Pb,\ub,\zb)(0,0)=\{0\}.
\end{equation}
To show this, we plug $u^*=z^*=0$ into system \eqref{Label coderivative adjoint equations}--\eqref{Label coderivative multipliers basic form} and deduce that $\beta=\delta=0$, which implies that \eqref{Label coderivative upper estimate} is equal to zero as well, and thus condition \eqref{Label S Mordukhovich criterion} is fulfilled.

To this end, we first realize that the first case \eqref{Label Omega definition 1} implies $\gamma^k=0$ and $\delta^k\in\R^M$. In the rest of the proof, we will consider only the second case \eqref{Label Omega definition 2} with a note that case \eqref{Label Omega definition 1} can be shown by a slight modification of the last paragraph. Fix any $(\gamma,\delta)\in\Nc_{\gph Q}(\zb,-q(\Pb,\ub,\zb))$. From Proposition \ref{Proposition normal cone graph Q} we know that there is some $s\in I(\sbar)$ such that for all $i\in I(s)$ there exist some $\mu_i^k$, $\tilde{\mu}_i^k$ and $\nu_i^k$ such that $\gamma^k=\mu_i^k+\tilde{\mu}_i^k$, $\delta^k=\nu_i^k$, $\tilde{\mu}_i^K = 0$ and relation
\begin{equation}\label{Label mu nu normal cone}
\bpm \tilde{\mu}_i^{k-1}\\ \mu_i^k \\ \nu_i^k\epm \in \Nc_{\cl \partition_{i^k}}(\partition _{s^k})
\end{equation}
holds for all $k=1,\dots,K$.

We will define now the index set
$$
I=\left\{(i^1,\dots,i^K)\left|
\begin{array}{l}
\begin{array}{ll}
s^k=1\implies i^k=1,& s^k=2\implies i^k\in\{1,3\}\\
s^k=3\implies i^k=3,& s^k=4\implies i^k\in\{3,5\}\\
s^k=5\implies i^k=5
\end{array} \\
\begin{array}{ll}
s^k=6,\ i^{k-1}\in\{1,3\}\implies i^k=5\\
s^k=6,\ i^{k-1}\in\{5,6,8\}\implies i^k=6\\
s^k=7,\ i^{k-1}\in\{1,3\}\implies i^k\in\{1,5\}\\
s^k=7,\ i^{k-1}\in\{5,6,8\}\implies i^k\in\{6,8\}\\
s^k=8,\ i^{k-1}\in\{1,3\}\implies i^k=1\\
s^k=8,\ i^{k-1}\in\{5,6,8\}\implies i^k=8
\end{array}
\end{array}
\right. \right\}
$$
and say that property $(P^k)$ holds if
\begin{subequations}\label{Label mu nu nonnegative product}
\begin{align}
\label{Label mu nu nonnegative product 1} s^k\in\{1,2\}&\implies\mu_i^k= 0\text{ for all }i\in I,\text{ and}\\ 
\label{Label mu nu nonnegative product 2} \exists j<k: s^j=4,\ s^{j+1}=\dots=s^k=8 &\implies \mu_i^k\geq 0\text{ for some }i\in I\\ 
\nonumber&\qquad\qquad\text{ with }i^j=3\text{ and }i^{j+1}=\dots=i^k=1.
\end{align}
\end{subequations}
Naturally, this property is satisfied if $s^k\notin\{1,2,8\}$ and it can be shown that $I\subset I(s)$. We will now show that for all $k=1,\dots,K-1$ we have the following implication
\begin{equation}\label{Label mu nu property}
\gamma^{k+1\top}\delta^{k+1}\leq 0\text{ and }(P^{k+1})\text{ holds }\implies \gamma^{k\top}\delta^k\leq 0\text{ and }(P^k)\text{ holds.}
\end{equation}

Thus, we assume $\gamma^{k+1\top}\delta^{k+1}\leq 0$ and that property $(P^{k+1})$ holds. We will now make use of the fact that $\delta^k=\nu_i^k$, and thus $\nu_i^k$ does not depend on $i$. By evaluating \eqref{Label mu nu normal cone}, we obtain that there exists $i\in I\subset I(s)$ such that
$$
\begin{array}{llllllllllll}
s^{k+1}=1\implies \tilde{\mu}_i^k=-\mu_i^{k+1}, &\qquad  s^k=1 \implies &  & \nu_i^k =0,\\
s^{k+1}=2\implies \tilde{\mu}_i^k=-\mu_i^{k+1}, &\qquad  s^k=2 \implies & \mu_i^k\geq 0, & \nu_i^k \leq0,\\
s^{k+1}=3\implies \tilde{\mu}_i^k=0, &\qquad  s^k=3 \implies & \mu_i^k=0, & \\
s^{k+1}=4\implies \tilde{\mu}_i^k=0, &\qquad  s^k=4 \implies & \mu_i^k\leq 0, & \nu_i^k \geq 0,\\
s^{k+1}=5\implies \tilde{\mu}_i^k=0, &\qquad  s^k=5 \implies &  & \nu_i^k =0,\\
&\qquad  s^k=6 \implies &  & \nu_i^k =0,\\
&\qquad  s^k=7 \implies &  & \nu_i^k = 0,\\
&\qquad  s^k=8 \implies & & \nu_i^k =0.
\end{array}
$$
The implication $s^k=7 \implies \nu_i^k = 0$ follows from $I\subset I(s)$, the nondependence of $\nu_i^k$ on $i$ and from the possibility to choose either $i^k\in\{1,5\}$ or $i^k\in\{6,8\}$. We observe now that in any case we have $\mu_i^{k\top}\delta^k = \mu_i^{k\top}\nu_i^k\leq 0$. This means that we have managed to prove $\gamma^{k\top}\delta^k\leq 0$ provided $\tilde{\mu}_i^k=0$ or $\nu_i^k=0$.

Thus, to prove the first part of \eqref{Label mu nu property} it remains to investigate cases $s^{k+1}\in\{1,2,6,7,8\}$ and $s^k\in\{2,3,4\}$. We will restrict now to these problematic cases. If $s^{k+1}\in\{1,2\}$, then $(P^{k+1})$ implies $\tilde{\mu}_i^k = -\mu_i^{k+1}= 0$ and we may apply the previous result. If $s^{k+1}\in\{6,7\}$ and $s^k=4$, then choosing $i^{k+1}=5$ and $i^k=3$ results in $\tilde{\mu}_i^k\leq 0$ and $\mu_i^k\leq 0$, which together with $\nu_i^k\geq 0$ implies $\gamma^{k\top}\delta^k\leq 0$. Due to definition of $\Theta$, it remains to investigate the last case: $s^{k+1}=8$ and $s^k=4$. In this case, we choose $i^{k+1}=1$ and $i^k=3$, which leads to $\mu_i^{k+1}+\tilde{\mu}_i^k\leq 0$ and $\mu_i^k\leq 0$. But since $\mu_i^{k+1}\geq 0$ due to $(P^{k+1})$, we have $\tilde{\mu}_i^k\leq 0$, and thus we again obtain $\gamma^{k\top}\delta^k\leq 0$. So far, we have managed to prove that if the left--hand side of \eqref{Label mu nu property} holds true, then we have $\gamma^{k\top}\delta^k\leq 0$.

To show the validity of formula \eqref{Label mu nu property}, we need to verify that $(P^k)$ holds as well. To do so, we multiply the adjoint equation \eqref{Label coderivative adjoint equations 2} by $\delta^k$, which due to assumption (A1)--(A2) and the already proven $\gamma^{k\top}\delta^k\leq 0$ results in $\gamma^k=\mu_i^k+\tilde{\mu}_i^k=0$ and $\delta^k=\nu_i^k=0$ for all $i\in I$. We will now investigate the cases described on the left--hand side of \eqref{Label mu nu nonnegative product}.

For \eqref{Label mu nu nonnegative product 1} we have $s^k\in\{1,2\}$. This by definition of $\Theta$ yields $s^{k+1}\in\{1,2,3,4,5\}$. If $s^{k+1}\in\{3,4,5\}$, then $\tilde{\mu}_i^k=0$ and thus $\mu_i^k=0$ follows. If on the other hand we have $s^k\in\{1,2\}$, then from assumed $(P^{k+1})$ we get $\tilde{\mu}_i^k=-\mu_i^{k+1}=0$, and thus $\mu_i^k=0$ follows for this case as well. To prove \eqref{Label mu nu nonnegative product 2} consider some $j<k$ and $s^j=4$, $s^{j+1}=\dots=s^k=8$, $i^j=3$ and $i^{j+1}=\dots=i^k=1$. If $s^{k+1}=8$, then $i^{k+1}=1$ and we may apply $(P^{k+1})$ to obtain $\mu_i^{k+1}\geq 0$, which together with $\tilde{\mu}_i^k+\mu_i^{k+1}\leq 0$ and $\mu_i^k+\tilde{\mu}_i^k=0$ implies $\mu_i^k\geq 0$. If $s^{k+1}\in\{6,7\}$, then choosing $i^{k+1}=5$ results in $\tilde{\mu}_i^k\leq 0$, which again implies $\mu_i^k\geq 0$. Since these are all possibilities due to the definition of $\Theta$, we have showed formula \eqref{Label mu nu property}.

Having this formula at hand, the rest of the proof is performed by a finite induction. Since $\tilde{\mu}_i^K=0$, by similar arguments as in the previous text we obtain that $\gamma^{K\top}\delta^K = \mu_i^{K\top}\nu_i^K\leq 0$, which further yields $\gamma^K=\mu_i^K=\delta^K=0$, and thus property $(P^K)$ is satisfied. Hence, we have obtained the validity of the first step for finite induction. Plugging this into the first adjoint equation \eqref{Label coderivative adjoint equations 1} and multiplying it by $\beta^K$, we obtain that $\alpha^K=\beta^K=0$. Since the left--hand side of \eqref{Label mu nu property} is satisfied, we immediately obtain that $\gamma^{K-1\top}\delta^{K-1}\leq 0$ and that $(P^{K-1})$ holds. Performing this procedure $K$ times, we obtain that \eqref{Label S Mordukhovich criterion} indeed holds, which finishes the proof.
\end{proof}

Finally, we summarize the derivation of the necessary optimality conditions in Theorem \ref{Theorem noc} below. Thereby, the normal cone $\Nc_{\gph Q}(\cdot)$ is computed in Proposition \ref{Proposition normal cone graph Q} and for the computation of $\Nc_{\gph\Lambda^k}(\cdot)$ we refer the reader to \cite[Theorem 2]{dontchev.rockafellar.1996} or \cite[Proposition 3.2]{henrion.romisch.2007}. Moreover, when solving system \eqref{Label coderivative multipliers basic form} and \eqref{Label noc adjoint equations}, one may use \cite[Lemma 4.7]{adam.outrata.2014} to its advantage.

\begin{thm}[First-order optimality conditions]\label{Theorem noc}
Consider the setting of the solution mapping $S:\P\mapsto(\ub,\zb)$ implicitly defined by system \eqref{Label state equation inclusion original} and fix some $(\ub,\zb)= S(\Pb)$. Assume (A1)--(A4) and that $J$ is continuously differentiable at $(\Pb,\ub,\zb)$. If $(\Pb,\yb,\zb)$ is a local minimum of problem \eqref{Label state equation inclusion original}, then there exists multipliers $(\alpha,\beta,\gamma,\delta)$ satisfying \eqref{Label coderivative multipliers basic form} such that the optimality condition
\begin{equation}\label{Label noc optimality condition}
0\in \nabla_{\P}J(\Pb,\ub,\zb) -\sum_{k=1}^K(\nabla_\P p^k)^\top\beta^k - \sum_{k=1}^K(\nabla_\P q^k)^\top\delta^k + \Nc_{\Pi}(\Pb),
\end{equation}
the adjoint equations with $k=1,\dots,K$
\begin{subequations}\label{Label noc adjoint equations}
\begin{align}
\label{Label noc adjoint equations 1} -\nabla_{u^k} J(\Pb,\ub,\zb)&=\alpha^k - (\nabla_u p^k)^\top\beta^k - (\nabla_u q^k)^\top\delta^k - (\nabla_{\ut} p^{k+1})^\top\beta^{k+1} - (\nabla_{\ut} q^{k+1})^\top\delta^{k+1},\\
\label{Label noc adjoint equations 2} -\nabla_{z^k} J(\Pb,\ub,\zb)&=\gamma^k - (\nabla_z q^k)^\top\delta^k - (\nabla_{\zt} p^{k+1})^\top\beta^{k+1} - (\nabla_{\zt} q^{k+1})^\top\delta^{k+1}
\end{align}
\end{subequations}
and terminal conditions $\beta^{K+1}=0$ and $\delta^{K+1}=0$ are satisfied.
\end{thm}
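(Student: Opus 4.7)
The plan is to assemble the theorem as a direct consequence of the three preceding results: Lemma~\ref{Lemma noc general form}, Lemma~\ref{Lemma coderivative estimate}, and Corollary~\ref{Corol S Lipschitz}. Since \eqref{Label state equation inclusion original} has been equivalently reformulated as the reduced problem \eqref{Label optimal control discretized general}, a local minimizer $\bar\pi$ of the reduced problem yields the first-order condition
\[
0 \in \partial \tilde J(\bar\pi) + \Nc_{\Pi}(\bar\pi),
\]
where $\tilde J(\pi) := J(\pi, S(\pi))$. This is a standard consequence of Fermat's rule together with the sum rule for the limiting subdifferential (see \cite[Proposition 5.3]{mordukhovich.2006}), applicable because $\Pi$ is closed convex and $\tilde J$ will be shown locally Lipschitz below.

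First, I would invoke Corollary~\ref{Corol S Lipschitz} to ensure that $S$ is locally Lipschitz around $\bar\pi$; this is the hypothesis required by Lemma~\ref{Lemma noc general form} and is exactly the point where the detailed coderivative analysis through $\Nc_{\gph Q}$ pays off. Combined with continuous differentiability of $J$ at $(\bar\pi,\bar u,\bar z)$, Lipschitzness of $S$ implies Lipschitzness of $\tilde J$, making the sum rule above valid with no further qualification condition beyond convexity of $\Pi$.

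Next, by Lemma~\ref{Lemma noc general form} I have the inclusion
\[
\partial \tilde J(\bar\pi) \subset \nabla_{\pi} J(\bar\pi,\bar u,\bar z)
+ \coder S(\bar\pi,\bar u,\bar z)\bigl(\nabla_{u} J(\bar\pi,\bar u,\bar z),\nabla_{z} J(\bar\pi,\bar u,\bar z)\bigr).
\]
Plugging this into the Fermat condition above and using Lemma~\ref{Lemma coderivative estimate} to describe every element of $\coder S(\bar\pi,\bar u,\bar z)(\nabla_u J,\nabla_z J)$ in the form \eqref{Label coderivative upper estimate} with multipliers $(\alpha,\beta,\gamma,\delta)$ satisfying the adjoint equations \eqref{Label coderivative adjoint equations} (with terminal conditions $\beta^{K+1}=0$, $\delta^{K+1}=0$) and the inclusions \eqref{Label coderivative multipliers basic form}, one obtains precisely \eqref{Label noc optimality condition}--\eqref{Label noc adjoint equations}. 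Here the adjoint equations are read with $u^{*k} = \nabla_{u^k} J(\bar\pi,\bar u,\bar z)$ and $z^{*k} = \nabla_{z^k} J(\bar\pi,\bar u,\bar z)$, which matches the right-hand sides in \eqref{Label noc adjoint equations 1}--\eqref{Label noc adjoint equations 2}.

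Since all the heavy lifting has already been done in Lemma~\ref{Lemma coderivative estimate} (coderivative upper estimate) and Corollary~\ref{Corol S Lipschitz} (Lipschitzness of $S$), no real obstacle remains; the only potential subtlety is verifying that the sum rule can be applied without a constraint qualification. This is automatic because $\Pi$ is closed convex (so $\iota_\Pi$ is convex lsc) and $\tilde J$ is locally Lipschitz, so the limiting subdifferential sum rule applies in its qualification-free form for the sum of a locally Lipschitz function and a convex lsc indicator (\cite[Corollary 3.4]{mordukhovich.2006}). Assembling these ingredients yields the theorem.
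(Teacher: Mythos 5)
Your proposal is correct and matches the paper's intent exactly: the paper gives no separate proof, stating only that the theorem ``summarizes the derivation,'' i.e.\ it is assembled from Fermat's rule on the reduced problem \eqref{Label optimal control discretized general}, the chain-rule inclusion of Lemma~\ref{Lemma noc general form}, the coderivative estimate of Lemma~\ref{Lemma coderivative estimate}, and the Lipschitz continuity of $S$ from Corollary~\ref{Corol S Lipschitz}, which is precisely your argument. Your added remark on the qualification-free sum rule for a locally Lipschitz function plus a convex indicator is a correct and welcome justification of the term $\Nc_{\Pi}(\Pb)$ in \eqref{Label noc optimality condition}.
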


\begin{remark}[More general dissipation I]\label{rem-general}
In a number of applications $\Rcal_2$ is finite but nonsmooth at 0 and 
$K_1=Z$. In this case, in the generalized equation system defining $S$, 
one has generally a sum of multifunctions which is typically very 
difficult to handle, cf.\ \cite[Theorem 10.41]{rockafellar.wets.1998}. 
Sometimes, however, an analytic formula for the behavior of $S$ at the single 
time instances can be obtained and then $D^*S$ can be computed by applying 
the (first-order) generalized differential 
calculus \cite{mordukhovich.2006,rockafellar.wets.1998}.

Another possible approach to this situation is to transform it into the form 
considered here, i.e.\ $\Rcal_2$ smooth and a suitable $K_1$. Let us 
illustrate this on a one-dimensional case $Z=\R$ with 
$\Rcal_2(\DT z)=a\max(0,\DT z)+b\max(0,-\DT z)$ with some $a,b\ge0$ and, 
e.g., $\Ecal(z)=\frac12z^2$. Considering artificial variable $(z_1,z_2)$ such 
that $z_1+z_2=z$, we may write
\begin{align}
\Ecal(z_1,z_2)=\frac12(z_1{+}z_2)^2\ \ \text{ and }\ \ \Rcal_2(\DT z_1,\DT z_2)
=\begin{cases}a\DT z_1-b\DT z_2&\text{if }\DT z_1\ge0\text{ and }\DT z_2\le0,\\
\infty&\text{otherwise}.\end{cases}
\end{align}
Such a transformation allows to widen the application range towards 
e.g.\ damage or delamination problems with healing in arbitrary space 
dimension. Another application can be frictional contact \cite{VoMaRo??} or 
adhesive contact with an interfacial plasticity \cite{RoPaMa??LSQR} allowing 
to distinguish less dissipative mode I (opening) from more dissipative mode 
II (shear) in two-dimensional cases. Another, rather academical, application is 
the bulk plasticity with kinematic hardening in one dimension. Naturally, all 
these applications are considered with a suitable space discretization.
\end{remark}

\begin{remark}[More general dissipation II]\label{rem-general-II}
In some applications the cone $K_1$ could be the 2nd-order (Lorentz, or 
colloquially also called ``ice-cream'') cone, defined in $\R^l$ by
$$
\{x \in \mathbb{R}^{l} \,|\ x_{l}\ge|(x_1,...,x_{l-1}) |\}.
$$
where $| \cdot |$ stands for the Euclidean norm. In this case it is possible 
to make use of coderivatives of the normal cone mapping associated with 
second-order cones which have been computed in \cite{outrata.sun.2008}. Then, 
however, the special technique of \cite{adam.cervinka.pistek.2014}, tailored 
to polyhedral multifunctions, cannot be used any more and we have to confine 
ourselves to standard calculus rules, which leads to less selective necessary 
optimality conditions.

Typical applications of this type with $K_1=Z$ are a frictional contact in 
three-dimensional case or plasticity with kinematic hardening in two- or 
three-dimensional case, again having in mind a suitable space discretization 
in each case. An example which uses a combination of $K_1\ne Z$ with a 
nonsmooth potential $\mathscr{R}_2$, both being of the ``ice-cream-type'', is 
plasticity with isotropic hardening, cf.\ 
\cite{HanRed99PMTN,mielke.2006,Stef08VPHE} which has the dissipation potential 
acting on the rate of $z=(p,\eta)$ of the form:
\begin{align}\label{plast}
\delta_S^*(\DT p)+\delta^{}_{K_1}(\DT p,\DT\eta)
\ \ \ \text{ with}\ \ 0\in S\subset\R^{d\times d}_\mathrm{dev}\ \text{ and }\ 
K_1:=\big\{(\DT p,\DT\eta)\in\R^{d\times d}_\mathrm{dev}
{\times}\R;\ \DT\eta \geq q_{_\mathrm{H}}\delta_S^*(\DT p)\big\}
\end{align}
where $q_{_\mathrm{H}}>0$ and 
$\R^{d\times d}_\mathrm{dev}:=\{A\in\R^{d\times d};\ A=A^\top,\ \mathrm{tr}\,A=0\}$,
and $\delta_A$ stands for an indicator function of a convex set $A$ and 
$\delta_A^*$ of its conjugate. Typically, $S$ a ball, which makes 
both $\delta_S^*$ and $K_1$ of the  ``ice-cream-type''.

A combination of the preceding case with a general polyhedral convex set $K_0$ is also 
possible. This combination allows for some applications in identification of 
parameters of some phenomenological models of phase transformations in
certain ferroic materials as shape-memory alloys where $K_0$ forms 
constraints on an internal variable like $p$ in \eqref{plast} and may 
be considered polyhedral, cf.\ the polycrystalic models in \cite{FrBeSe14MMCM,SFBBS12TMNT}, possibly also in combination with 
plasticity like that one in \eqref{plast}, cf.\ \cite{AuReSt07TDMD,SadBha07MICM}.
\end{remark}

\section{Adhesive contact problem and its identification}
\label{sect-formulation}

We illustrate the above abstract identification problem \eqref{identification} 
on an unilateral adhesive-contact problem for a linear elastic body at 
small strains. We consider $\Omega\subset\R^2$ a Lipschitz domain with 
$\GC\subset\partial\Omega$
and $\GD\subset\partial\Omega$ disjoint parts of the boundary 
$\partial\Omega$ where the delamination is undergoing and time-varying 
Dirichlet boundary condition where $\wD(t)$ is prescribed, respectively.
Now, $u:\Omega\to\R^2$ is the displacement and $z:\GC\to[0,1]$ is 
a delamination parameter having the meaning of the portion of bonds 
of the adhesive which are not debonded. With $\C$ the tensor of elastic moduli,
$h:[0,1]\to\R$ a convex adhesive-stored-energy function,
and with $e(u)$ denoting the small-strain
tensor, i.e.\ $[e(u)]_{ij}=\frac12\frac{\partial u_i}{\partial x_j}
+\frac12\frac{\partial u_j}{\partial x_i}$, we will consider the boundary-value 
problem
\begin{subequations}\label{class-form}\begin{align}\label{class-form-a}
&\mathrm{div}\,\C e(u)=0&&\text{in }[0,T]\times\Omega,
\\\label{class-form-b}&\C e(u)\vec{n}=0&&
\text{on }[0,T]\times(\Gamma{\setminus}(\GC\cup\GD)),
\\&u|_{\GD}=\wD(t,\cdot)&&\text{on }[0,T]\times\GD,
\\\label{class-form-d}
   &\left.\begin{array}{ll}
&\hspace{-1.7em}\uN\ge0,
\quad
z\kappaN\uN+\vec{n}^\top\C e(u)\vec{n}\ge0,
\\[.3em]
   &\hspace{-1.7em}
\big(z\kappaN\uN+\vec{n}^\top\C e(u)\vec{n}\big)\uN=0
\!\!\!\!\!
\\[.3em]
   &\hspace{-1.7em}\DT{z}\le0,\ \ \ \ \xi+\alphaF\ge0,\ \ \ \ \DT{z}(\xi{+}\alphaF)=0,
 \\[.3em] &\hspace{-1.7em}
\xi\,+\,h'(z)\,\,+\,
\frac12(\kappaN\uN^2{+}\kappaT\uT^2\big)\,-\,\,\eps\divS\nablaS z\,\ge\,0,\ \  z\ge0,\!\!\!
 \\[.3em] &\hspace{-1.7em}
\big(\xi+h'(z)+\frac12(\kappaN\uN^2{+}\kappaT\uT^2\big)-\eps\divS\nablaS z\big)\,z=0,
\\[.3em]
   &\hspace{-1.7em}
z\kappaT\uT+\C e(u)\vec{n}-(\vec{n}^\top\C e(u)\vec{n})\vec{n}=0,
   \end{array}\right\}\!\!\!\!
   &&\text{on }[0,T]\times\GC,
\end{align}
\end{subequations}
where we used the decomposition of the trace of displacement 
$u=\uN\vec{n}+\uT$ with $\uN$ being the normal displacement defined as 
$u\cdot\vec{n}$ and $\uT$ being the tangential displacement on $\GC$, and where
$\nablaS$ denotes a ``surface gradient'', i.e.\ the tangential 
derivative defined as $\nablaS z=\nabla z-(\nabla z{\cdot}\vec{n})\vec{n}$ 
for $z$ defined around $\GC$. Alternatively, pursuing the 
concept of fields defined exclusively on $\GC$, we can consider 
$z:\GC\to\R$ and extend it to a neighborhood of $\GC$ and
then again define $\nablaS z:=(\nabla z)P$ 
with $P={\mathbb I}-\vec{n}\otimes\vec{n}$ onto a tangent space, which, in 
fact, does not depend on the particular extension.
Moreover, $\divS:=\mathrm{tr}\,\nablaS$. Then $\divS\nablaS$ is the 
so-called Laplace-Beltrami operator.

Let us remark that \eqref{class-form-a} is the force equilibrium, 
 \eqref{class-form-b} prescribes the zero-traction (i.e.\ free
surface) on $\Gamma{\setminus}(\GC\cup\GD)$. 
The condition \eqref{class-form-d} combines three complementarity
problems related respectively to the Signorini unilateral contact for the displacement $u$, the non-negativity constraint for $z$, 
and the unidirectionality constraint (i.e.\ the non-positivity constraint
on $\DT z$), and eventually the equilibrium of tangential stress.
More in detail, the last two mentioned complementarity problems
write in the classical formulation as the inclusion 
$\partial\delta_{[-\alphaF,\infty)}^*(\DT z)\ni\xi$
with the admissible driving force fulfilling the inclusion 
$\xi\in-\partial_z\Ecal(t,\pi,u,z)
=\eps\divS\nablaS z-h'(z)-
\frac12(\kappaN\uN^2{+}\kappaT\uT^2)-\Nc_{[0,\infty)}(z)$.

Referring to the abstract problem \eqref{Biot}, the 
boundary-value problem \eqref{class-form} corresponds to the 
stored and the dissipation energies
\begin{subequations}\label{ansatz+}\begin{align}
&\Ecal(t,\P,u,z):=\begin{cases}
\displaystyle{\int_{\GC}\frac{1}{2}z
\big(\kappaN\uN^2+\kappaT\uT^2\big)
+
h(z)+\frac{1}{2}\eps\nablaS z{\cdot}\nablaS z
\dS}\hspace*{-9em}&
\\[-.3em]\qquad\quad
+\displaystyle{\int_\Omega\frac{1}{2}\C e(u)\colon e(u)\dx}
&\text{if }u|_{\GD}=\wD(t,\cdot)\text{ on $\GD$ and }
\\[-.5em]&u|_{\GC}{\cdot}\vec{n}\ge0\text{ and }z\ge0\text{ on $\GC$},
\\[-.1em]\qquad\infty&\text{otherwise},
\end{cases}\label{ansatz+E}
\\&\label{ansatz+R}
\Rcal_1\equiv0,\ \ \ \ \Rcal_2(\DT z)
:=\!\begin{cases}
\displaystyle{\int_{\GC}\!\!\alphaF|\DT z|\dS}\!\!\!\!\!
&\text{ if }\DT z\le0\text{ a.e.\ on }\GC,
\\[-.3em]\qquad\infty&\text{ otherwise},
\end{cases}\quad\text{ with }\P=(\alphaF,\kappaN,\kappaT),
\end{align}

Note that $\Ecal(t,\P, \cdot,\cdot)$ is not convex 
but it is separately convex and, if $\GD$ is non-empty and $h$ is strictly
convex, it is  separately strictly convex, complying with our assumption 
(A1)--(A2). Considering $h$ quadratic, this leads, after a suitable 
spatial discretization of \eqref{identification-disc}, to recursive 
alternating strictly convex 
{\it Quadratic-Programming} (QP) which can be solved by efficient
prefabricated software packages. 

The (distributed) parameters to be identified will be the fracture 
toughness $\alphaF$ and the elasticity-moduli of the adhesive $\kappaN$ and 
$\kappaT$, i.e.\ we have considered simply $\P=(\alphaF,\kappaN,\kappaT)$ as
outlined in \eqref{ansatz+R}.  
This choice has a certain motivation in engineering where, in contrast to 
essentially all the bulk material properties, these parameters are largely
unknown and have to be set up in a rather ad-hoc way to fit at least roughly some experiments,
cf.\ e.g.\ \cite{TMGCP10ACTA,TMGP11BACO} based on experiments from 
\cite{JCMO07ENEE}. Actually, the models of adhesive contacts used in 
engineering may be more complicated; typically they distinguish modes of 
delamination (opening vs shear) and/or may involve friction.
Identification of 
friction/adhesive contacts may have interesting applications in geophysics 
where such contact surfaces (called faults) are deep in lithosphere 
and not accessible to direct investigations although a lot of indirect
data from earthquakes are usually available; a popular rate-and-state 
friction model involves one internal parameter (called ageing) which is 
analogous to the delamination parameter used here, cf.\ \cite{Diet07ARSD} for 
a survey or also e.g.\ \cite{Roub14NRSD}. Other models that may
lead to a recursive QP have been mentioned in Remark~\ref{rem-general},
in contrast to problems from  Remark~\ref{rem-general-II} that would lead
to a recursive Second-Order Cone Programming (SOCP) for which
efficient codes do exist, cf.\ \cite{AliGol03SOCP}.

We prescribe some initial conditions $u_0\in H^1(\Omega)$ and 
$z_0\in H^1(\GC)$, $0\le z_0\le1$; note that then $0\le z\le1$ is 
satisfied during the whole evolution process. We further consider a fixed time
horizon $T>0$ and
assume that we have some given desired response 
$(u^{}_\mathrm{d},z^{}_\mathrm{d})$ corresponding e.g. to some experimentally 
obtained measurements, and we want to identify parameters $\P$ such that the 
response $(u,z)=S(\P)$ is as close to $(u^{}_\mathrm{d},z^{}_\mathrm{d})$ as 
possible, i.e.\ we want to minimize the objective 
\begin{align}
\int_0^T\Big[\int_\Omega\frac\zeta2\big|u-u_\mathrm{d}\big|^2\dx
+\int_{\GC}
\frac12\big|z-z_\mathrm{d}\big|^2\dS\Big]\dt
\end{align}
\end{subequations}
where 
$\zeta$ is a fixed weight balancing both parts of the objective function.

After the semi-implicit time discretization, the whole problem \eqref{ansatz+}
reads as
\begin{subequations}\label{Label optimal control problem cont}
\begin{align}\label{Label optimal control problem cont upper}
&\left.
\begin{array}{ll}
\text{Minimize}&\displaystyle{
\tau\sum_{k=1}^K
\Big[\int_\Omega\frac\zeta2\big|u^k-u_\mathrm{d}^k\big|^2\dx
+\int_{\GC}\frac12
\big|z^k-z_\mathrm{d}^k\big|^2\dS\Big]}\\[1em]
\text{subject to}&(u^k,z^k)= S^k(\P,u^{k-1},z^{k-1}),
\ \ \ k=1,...,K,\ \ \text{ and }\\
&\P=(\alphaF,\kappaN,\kappaT)\in\Pi,
\end{array}
\right\}
\intertext{
where the solution map $S^k:(\P,u^{k-1},z^{k-1})\mapsto (u^k,z^k)$ for a particular time instant is now defined by the alternating recursive system: given $\P=(\alphaF,\kappaN,\kappaT)$ and previous values $(u^{k-1},z^{k-1})$, the first one is 
solved for $u^k$ and the second one for $z^k$ recursively for $k=1,...,K$:}
\label{Label state equation optimization cont lower 1}
&\left.
\begin{array}{ll}
\underset{u\in H^1(\Omega,\R^d)}{\text{Minimize}}\ 
&\displaystyle{\int_\Omega\frac{1}{2}\C e(u)\colon e(u)\dx
+\int_{\GC}\frac{1}{2}z^{k-1}
\big(\kappaN\uN^2+\kappaT\uT^2\big)
\dS}\\[1em]
\text{subject to}\ &u|_{\GD}=\wD^k:=\wD(k\tau,\cdot)\ \ \text{ and }\ \ 
u|_{\GC}{\cdot}\vec{n}\geq 0,
\end{array}\hspace*{4.7em}\right\}
\\
\label{Label state equation optimization cont lower 2}
&\left.
\begin{array}{ll}
\!\!\!\!\!\!\underset{z\in H^1(\GC)\cap L^\infty(\GC)}{\text{Minimize}}\ 
&\displaystyle{\int_{\GC}\Big[
h(z) 
{+}\frac{\eps}2\nablaS z{\cdot}\nablaS z
+ \Big(\frac{1}{2}
\big(\kappaN(\uN^k)^2_{}{+}\kappaT(\uT^k)^2_{}\big)
-\alphaF\Big) z\Big]\dS}\\[1em]
\text{subject to}\ &0\leq z\leq z^{k-1}.
\end{array}\hspace*{0em}\right\}
\end{align}
\end{subequations}

Discretizing system \eqref{Label state equation optimization cont lower 1} 
via finite elements, we obtain
\begin{equation}\label{Label state equation optimization 1 split}
\left.\begin{array}{ll}
\underset{u=(\uC,\uF,\uD)}{\text{Minimize}}\ &
\displaystyle{\frac{1}{2}u^\top A(\P,z^{k-1})u}\\
\text{ subject to}\ &\uC\in \Lambda_0:=\{u|\, u{\cdot}\vec{n}\geq 0\}\ \ \ \text{ and }\ \ \ \uD=\wD^k,
\end{array}\right\}
\end{equation}
where the components of $u=(\uC,\uF,\uD)$ correspond to the displacement on 
contact boundary $\GC$, in free nodes (interior and Neumann) in 
$\bar\Omega\setminus(\GC{\cup}\GD)$, and on 
Dirichlet boundary $\GD$, respectively. Matrix $A$ has the following form
$$
A(\P,z^{k-1}) = \bpm A\CC & A\CF & A\CD\\ A\FC& A\CF & A\FD\\ A\DC& A\DF & A\DD \epm + \bpm \At(\P,z^{k-1}) & 0 & 0\\ 0& 0 & 0\\ 0& 0 & 0 \epm
$$
where the first part corresponds to the discretization of the first part of the objective in \eqref{Label state equation optimization cont lower 1} and similarly for the second part. Using simple calculus, discretized problem \eqref{Label state equation optimization 1 split}
can be written as
\begin{subequations}\label{Label state equation optimization reduced}
\begin{equation}\label{Label state equation optimization reduced 1}
\left.\begin{array}{ll}
\underset{\uC}{\text{Minimize}}\ &
\displaystyle{\frac{1}{2}\uC^\top(A_\alpha+\At(\P,z^{k-1}))\uC
+(A_\beta \wD^k)^\top \uC}\\
\text{subject to}\ &\uC\in\Lambda_0,
\end{array}\right\}
\end{equation}
where we have defined
$$
\begin{array}{ll}
A_\alpha := A\CC-A\CF A\FF^{-1}A\FC, &\qquad A_\gamma := -A\FF^{-1}A\FC,\\
A_\beta := A\CD-A\CF A\FF^{-1}A\FD, &\qquad A_\delta := -A\FF^{-1}A\FD.
\end{array}
$$
Similarly, when discretizing 
\eqref{Label state equation optimization cont lower 2}, we obtain the 
following problem
\begin{equation}\label{Label state equation optimization reduced 2}
\left.\begin{array}{ll}
\underset{z}{\text{Minimize}}\ &
\displaystyle{\frac{1}{2}z^\top Bz+b(\P,u^k)^\top z}\\[.5em]
\text{subject to}\ &0\leq z\leq z^{k-1}.
\end{array}\right\}
\end{equation}
\end{subequations}

Since both problems in \eqref{Label state equation optimization reduced} are 
quadratic, we can pass to their necessary optimality conditions and the whole 
optimization problem \eqref{Label state equation inclusion original} 
reads as
\begin{align}\label{Label optimal control discretized}
\left.\begin{array}{ll}
\underset{\P,\uC,z}{\text{Minimize}}\!\! &\displaystyle{
\tau\sum_{k=1}^K\Big[\frac\zeta2\big|\uC^k-[u^{}_\mathrm{d}]_{\scriptscriptstyle\textrm{C}}^k\big|^2 + \frac\zeta2\big|A_\gamma \uC^k+A_\delta \wD^k
-[u^{}_\mathrm{d}]_{\scriptscriptstyle\textrm{F}}^k\big|^2 + 
\frac12\big|z^k-z_\mathrm{d}^k\big|^2\Big]}\\
\text{subject to}\!
&0\in (A_\alpha+\At(\P,z^{k-1}))\uC^k+A_\beta \wD^k+
\Nc_{\Lambda_0}(\uC^k),
\ \ k=1,...,K,\ \ u^0=u_0,\!
\\[.5em]
&0\in Bz^k+b(\P,u^k)+\Nc_{[0,z^{k-1}]}(z^k),
\ \ \ \ \ \ \ \ \ k=1,...,K,\ \ \ \ \ \ \ \ \ z^0=z_0,\\[.5em]
&\P\in \Pi.
\end{array}\right\}\!
\end{align}
By passing from $u^k$ to $\uC^k$ we have managed to reduce the number of 
parameters in \eqref{Label state equation optimization cont lower 1} from the 
number of all nodes to the number of contact nodes only. This is especially 
powerful because 
the first inclusion in \eqref{Label optimal control discretized}
 will be solved many times during the parameter identification procedure while it is sufficient to compute matrices $A_\alpha$, $A_\beta$, $A_\gamma$ and $A_\delta$ only once.

To be able to use Theorem \ref{Theorem noc}, we need to check whether 
assumptions (A1)--(A4) are satisfied. But this amount to showing that matrices 
$A_\alpha+\tilde{A}(\P,z^{k-1})$ and $B$ are positive definite. Since $A_\alpha$ 
is Schur complement of $A\CC$ in 
$\hat{A}:=\bpm A\CC\!\!&\!\!A\CF\\[-.2em] A\FC\!\!&\!\!A\FF\epm$, it is 
positive definite if $\hat{A}$ is positive definite. But the positive 
definiteness of $\hat{A}$ follows from the 
conformal FEM via positive definiteness of $\C$ together with the Korn
inequality using Dirichlet boundary conditions on $\GD$.
More precisely, the FEM may also involve some numerical integration
(which in fact has been used for our implementation, too).

\begin{remark}[Boundary-element method]
Note that \eqref{Label state equation optimization reduced 1} is the 
optimization problem on $\GC$ because we eliminated the values $\uD$ and $\uF$.
This is the philosophy of the \emph{boundary-integral method} and 
$(A_\beta\wD^k)^\top$ in \eqref{Label state equation optimization reduced 1}
is in the position of the (discretized) Poincar\'e-Steklov operator 
transferring Dirichlet boundary conditions on $\GC$ to traction forces
on $\GC$. 
The discretization then leads to the celebrated 
\emph{Boundary-Element Method} (BEM).
One option for this discretization is FEM, cf.\ e.g.\ \cite{LanSte07CFBE},
which is in fact what we used here and such BEM represents a noteworthy 
interpretation of \eqref{Label state equation optimization reduced 1}.
Other options are based on a direct discretization of the 
Poincar\'e-Steklov operator by using the approximate evaluation of 
the so-called Somigliana identity based on 
the underlying integral Green operators, cf.\ e.g.\ 
\cite{BVPM02CCDB,ParCan97BEMF,TMGP11BACO,SauSch11BEM}.
\end{remark}

\begin{remark}[Variants of the adhesive model]
The contribution $h(z)$ in \eqref{ansatz+E} has the meaning of a stored 
energy deposited in the adhesive bonds and, during delamination, this
energy naturally increases. 
If a reversible damage (called healing) were allowed, cf.\ 
Remark~\ref{rem-general} above, $h'(z)$ would give a driving force 
for it. Strict convexity of $h$ represents 
certain \emph{cohesive effects}: when delamination is tended to be complete,
still more and more energy is needed for complete delamination.
Cohesive effects can also be modelled by letting $\kappaN$ and 
$\kappaT$ dependent on $z$ so that $z\mapsto z\kappaN(z)$
and  $z\mapsto z\kappaT(z)$ are convex. This however does not 
guarantee strict convexity of $\Ecal(t,\P,u,\cdot)$.
Other option complying with a purely adhesive contact (e.g.\ $h=0$)
would be to consider a small, linear \emph{viscosity} in $z$, i.e.\ $\Rcal_2$
strictly convex and quadratic. Then the usual concept 
of weak solution can be used again together with the semi-implicit 
fractional-step-type time discretization. Yet, such problem becomes 
computational difficult if the viscosity is small, 
as often considered with the goal to approximate so-called vanishing-viscosity 
solution in the rate-independent inviscid limit, cf.\ \cite{roubicek.2013a}.
\end{remark}

\section{Numerical experiments}\label{sect-numerics}
In this section we illustrate usage and efficiency of the theory developed 
in Section \ref{sect-opt-cond} and later specified in 
Section~\ref{sect-formulation} on a two-dimensional problem where an 
elastic body glued along the $x$-axis and pulled
in the direction of the 
$y$-axis by the time-varying loading $\wD$, cf.\ Figure~\ref{Figure body}. 
\begin{figure}[!ht]
\begin{center}
\includegraphics[width=0.7\textwidth]{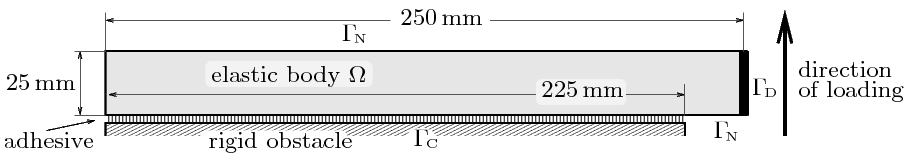}
\end{center}
\vspace*{-2em}
\caption{Geometry and boundary conditions of the two-dimensional
problem used for calculation.}
\label{Figure body}
\end{figure}
Considering the parameters $\alphaF$, $\kappaN$, and $\kappaT$ to be unknown, the main goal is to identify them via 
an inverse problem. Following the delamination example in 
\cite{roubicek.2013a,RoPaMa??LSQR}, we considered the isotropic material in 
the bulk with the tensor of elastic moduli
$$
\C_{ijkl}:=\frac{\nu E}{(1{+}\nu)(1{-}2\nu)}\delta_{ij}\delta_{kl}+\frac{E}{2(1{+}\nu)}(\delta_{ik}\delta_{jl}+\delta_{il}\delta_{jk})
$$
with the Young modulus $E = 70\text{ GPa}$ and the Poisson ratio $\nu=0.35$. Concerning the adhesive-stored-energy and the gradient terms in 
\eqref{ansatz+E}, we used 
$h(z)=\frac12z^2-z$ and $\eps=1\,$J,
while the weight $\zeta$ was chosen as $10^{10}$m$^{-2}$. For the space discretization we employed a mesh with $14\times 20$ nodes and 
the equidistant time discretization used $40$ time instants. The contact 
boundary consists of $12$ nodes. As already said, there are three parameters 
to be identified: $\alphaF$, $\kappaN$, and $\kappaT$. Moreover, we assume 
that the values of these parameters are not constant along the contact 
boundary but it may have different values in every contact node. This leads 
to a total number of $3\times 12=36$ parameters to be identified.

We fixed these $36$ values, to be more specific the mean of $\alphaF$, 
$\kappaN$, and $\kappaT$ was $187.5 \text{ J/m}^2$, $150\text{ GPa/m}$, and 
$75 \text{ GPa/m}$, respectively. The difference between the smallest and 
largest value of $\alphaF$ was approximately $10\%$ and similarly for $\kappaN$ and $\kappaT$. Next, we randomly generated some (with time increasing) dragging loading
 $\wD$, computed 
the corresponding $(u^{}_\mathrm{d},z^{}_\mathrm{d})$, and plugged them into 
the upper level of 
problem \eqref{Label optimal control discretized}. Since there was no 
perturbation of $(u^{}_\mathrm{d},z^{}_\mathrm{d})$ present, the optimal 
objective value was zero, which allows numerical testing of the 
efficiency of the optimization algorithm.

The computation of problem \eqref{Label optimal control discretized} was 
performed in Matlab. To compute $u^k$ from the first inclusion in 
\eqref{Label optimal control discretized},
we modified and used the already written code \cite{alberty.carstensen.funken.klose.2002}. Since a direct application of a gradient algorithm to whole problem \eqref{Label optimal control discretized} lead to rather inferior results, we had to find another way to solve \eqref{Label optimal control discretized}, specifically we used a combination of three optimization algorithms. The first was PSwarm \cite{vaz.vicente.2007}, which combines pattern search with genetic algorithm particle swarm, the second one standard Matlab function {\tt fminunc} and the last one a nonsmooth modification of BFGS algorithm \cite{lewis.overton.2012} with its implementation \cite{skajaa.2010}.

The optimization process was run in four phases. For the first phase, we simplified the problem and assumed that the parameters are constant along the contact boundary. This reduced the number of parameters from $36$ to $3$. To this problem, the algorithm PSwarm was used, however, we did not let it converge to the optimal solution but it was interrupted when the problem reached a priori given threshold or when the optimal value did not improve much in several successive iterations. In other words, the goal of the first phase was to find an estimate of the solution. Since PSwarm works rather with populations instead of single points, multiple initial points had to be chosen. These points were generated randomly from the following intervals
$$
\alphaF\in\big[100\text{ J/m}^2, 500\text{ J/m}^2\big],\qquad
\kappaN, \kappaT\in\big[10\text{ GPa/m}, 1000\text{ GPa/m}\big].
$$

In the second phase, the reduced problem was still considered but this time, an algorithm using a gradient information was used. Similarly to the first phase, we did not let the it converge and interrupted it prematurely. Because of this interruption, nonregular points were usually evaded and it was possible to use {\tt fminunc}, even though it is designed for smooth functions.

While in the first two phases, the values of parameters were constant on the contact boundary, this no longer holds true for the last two phases. In the third one, we considered the state in which one parameter corresponds to two nodes on the contact boundary, while in the fourth phase every parameter corresponded to only one node. This means that there were $18$ parameters in the third phase and $36$ in the last one. The evolution of the optimal value can be seen in Figure \ref{Figure convergence}. Note that on the $y$ axis the logarithm of the objective value is depicted and that the vertical lines separate the four phases.
\begin{figure}[!ht]
\begin{center}\includegraphics[width=0.7\textwidth]{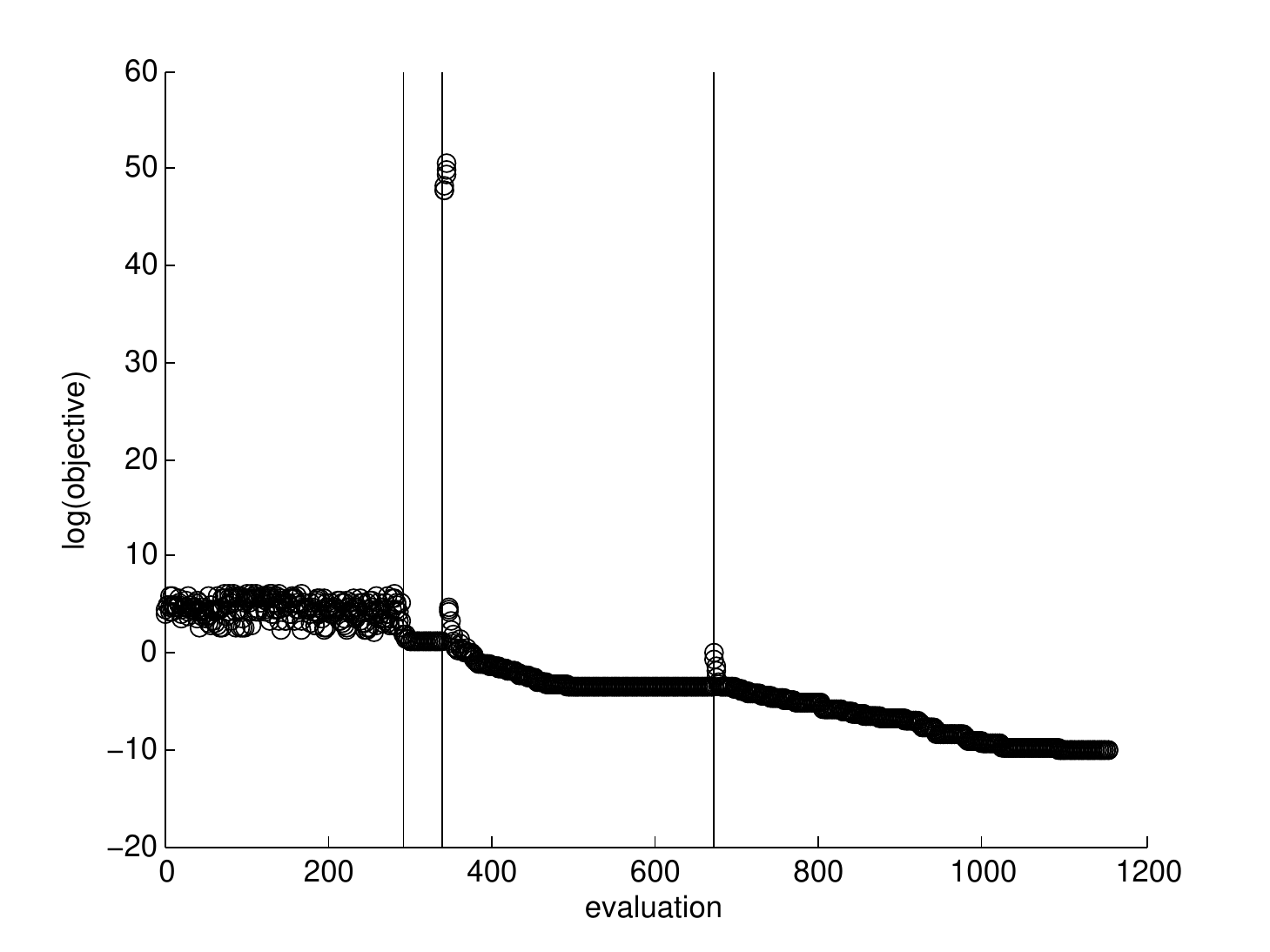}
\end{center}
\vspace*{-2em}
\caption{Development of the objective value during particular iterations of 
the optimization algorithms used during the four phases of our optimization: phase 1 used a global optimization algorithm (PSwarm), whereas phases 
2--4 used a (sub)gradient algorithm with subsequently refined discretization of $\GC$.}
\label{Figure convergence}
\end{figure}

The following table summarizes the values of parameters and of the objective function for all phases. The first column presents the best point in the initial population of PSwarm. The next four columns show the optimal solutions and values of all four phases. Finally, the last column corresponds to the actual values of parameters. Since there were multiple values distributed along the boundary for the last three columns, we show only their mean in such cases.
$$
\begin{array}{c||llllll}
&\text{starting} &\text{phase }1 &\text{phase }2 &\text{phase }3&\text{optimal}&\text{desired} \\\hline\hline
\alphaF &203.934 &190.405 &194.877 &187.489 &187.512 &187.5\\ 
\kappaN &0.822{\cdot}10^{11} &1.586{\cdot}10^{11} &1.462{\cdot}10^{11} &1.499{\cdot}10^{11} &1.500{\cdot}10^{11} &1.5{\cdot}10^{11}\\ 
\kappaT &47.251{\cdot}10^{10} &2.326{\cdot}10^{10} &7.317{\cdot}10^{10} &7.498{\cdot}10^{10} &7.499{\cdot}10^{10} &7.5{\cdot}10^{10}\\\hline 
\text{objective} &3138.97 &70.503 &14.184 &3.573{\cdot}10^{-4} &7.538{\cdot}10^{-11} &\ \ \ \ 0\\ 
\end{array} 
$$ 

In Figure \ref{Figure delamination} we show the the displacement $u$ (magnified by factor $50$) corresponding to one of the random initial points used for PSwarm and solution of the four phases. A circle on the contact boundary mean that no delamination has taken place yet at the corresponding node while an asterisk means that the corresponding node has been completely delaminated. No symbol being present indicates that only a partial delamination took place. Since the contact boundary is shorter than the length of the body, there are no symbols at the bottom right corner.
\begin{figure}[!ht]
\begin{center}
\includegraphics[width=0.95\textwidth,height=0.7\textwidth]{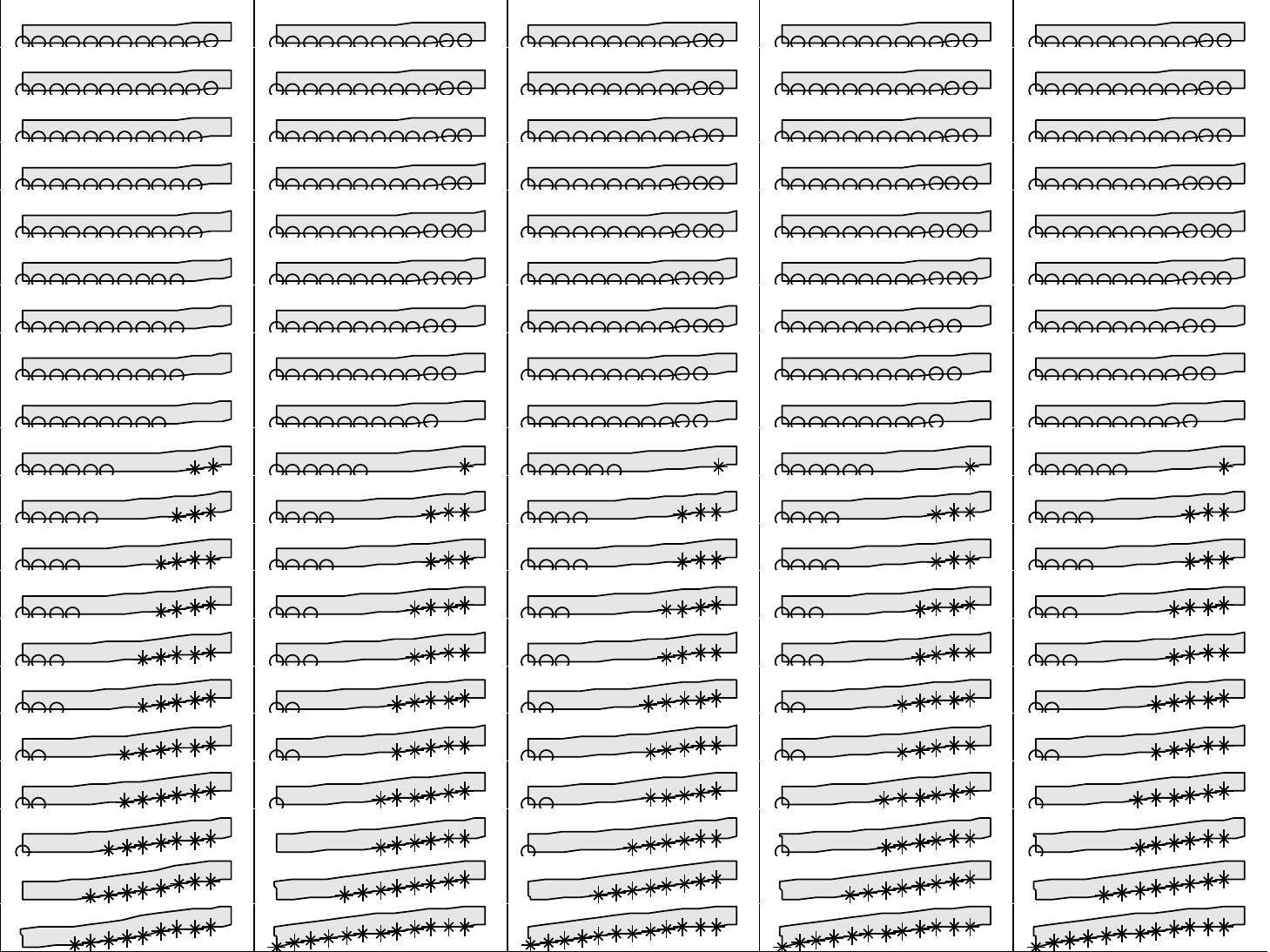}
\end{center}
\vspace*{-1em}
\caption{Evolution of the deformed specimen with distribution of the delamination parameter $z$ along $\GC$ (only values 1 or 0 are displayed) at 17 selected time instances. Displacements depicted as magnified by factor $50\times$.}
\label{Figure delamination}
\end{figure}

In Figure \ref{Figure parameters} we show the distribution of the elastic adhesive moduli $\kappaN$ and $\kappaT$ as well as the fracture toughness $\alphaF$ along the contact boundary $\GC$. Four lines corresponding to the actual parameters and to the terminal points of phases $2$, $3$ and $4$ are depicted. The horizontal line without any symbols corresponds to phase $2$, the line with circles corresponds to line $3$ and the line with asterisks corresponds to phase $4$, which means that it depicts the parameters identified by the algorithm. The last line without any symbols (which coincides with the line with asterisks for $\kappaN$) depicts the actual parameters. We see that 
while the result of phase $3$ provided a good estimate for the actual parameters. Phase $4$ provided only a slight improvement for $\kappaT$ while it managed to identify the values of $\kappaN$ completely.
\begin{figure}[!ht]
\begin{center}
\includegraphics[width=0.99\textwidth, height=0.35\textwidth]{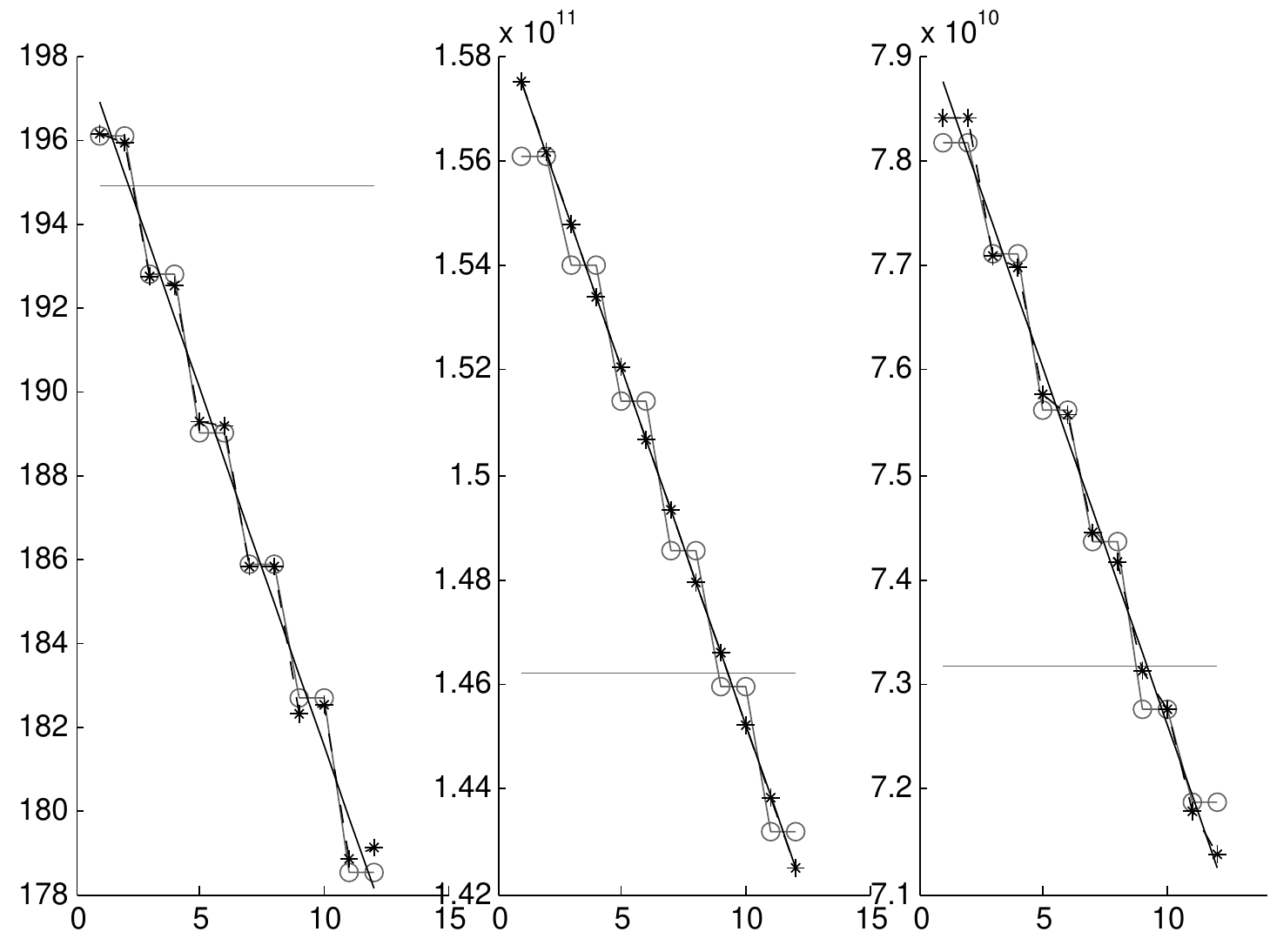}
\end{center}
\vspace*{-2em}
\caption{Parameter distribution along the contact boundary, graphs 
depicting form left to right $\alphaF$, $\kappaN$ and $\kappaT$
resulting after particular phases of the optimization algorithm.
}
\label{Figure parameters}
\end{figure}

\section*{Concluding remarks}
The optimality conditions stated in Theorem \ref{Theorem noc} are in the MPEC 
literature called M-stationarity conditions because they are based exclusively 
on notions from the Mordukhovich subdifferential calculus. They are relatively 
sharp and can very well be used, e.g., for testing this type of stationarity 
at points computed via ImP. It would be a great challenge to derive suitable 
optimality conditions also for the original  continuous MPEEC 
\eqref{identification}. Unfortunately, this problem is formulated over 
non-Asplund spaces (with a possible exception of the space for the variable 
$\P$) which are not amenable for a treatment via the Mordukhovich calculus.

If function $\Rcal_2$ in \eqref{identification-disc} happens to be 
nondifferentiable as in Remarks \ref{rem-general}--\ref{rem-general-II},
then in the generalized equation system defining $S$ one has to do with sums of 
multifunctions. Such situations occur in the so-called Stampacchia variational 
inequalities, whose sensitivity and stability analysis represents a great open 
problem. One possible way to overcome this hurdle could be a smoothening of 
$\Rcal_2$ or a smooth penalization of the constraint.

In this paper, apart from some particular cases like that mentioned in 
Remark~\ref{rem-convergence}, the main peculiarity consists in the fact that 
the original continuous problem in \eqref{identification} (whose parameters 
$\pi$ are to be identified) does not need to have a unique 
response. To control (or identify) such systems is, from very fundamental
reasons, very doubtful. Therefore, one should interpret the approach used in
this article carefully, counting only with a response of \eqref{Biot}
that can be approximated by a particular numerical method 
\eqref{identification-disc} and being aware that possibly some other
responses might exist and give even more accurate results. This is 
the best one can do and assume in identification of system of 
the type \eqref{Biot} which may cover very general situation 
e.g.\ sudden ruptures which are naturally very difficult to be
controlled (or identified).

{\small
\section*{Acknowledgments}
This research has been supported by GA\,\v CR through
the projects 201/12/0671 
``Variational and numerical analysis in nonsmooth continuum mechanics'',
201/10/0357 ``Modern mathematical and computational 
models for inelastic processes in solids'', 13-18652S ``Computational modeling of damage and transport processes 
in quasi-brittle materials'', 14-15264S 
``Experimentally justified multiscale modelling of shape memory alloys''
with the institutional support RVO:61388998 (\v CR) and SVV--2014--260 105.
}

\bibliographystyle{abbrv}
\bibliography{lajvotr-opt-delam}

\end{document}